\begin{document}
\newcommand{\beq}{\begin{eqnarray}}
\newcommand{\eeq}{\end{eqnarray}}
\newcommand{\beas}{\begin{eqnarray*}}
\newcommand{\enas}{\end{eqnarray*}}
\newcommand{\bea}{\begin{eqnarray}}
\newcommand{\ena}{\end{eqnarray}}
\newcommand{\nn}{\nonumber}
\newcommand{\ignore}[1]{}
%perl texref.pl -unref cnm0616.tex > labelrefs
\newtheorem{theorem}{Theorem}[section]
\newtheorem{corollary}{Corollary}[section]
\newtheorem{conjecture}{Conjecture}[section]
\newtheorem{proposition}{Proposition}[section]
\newtheorem{remark}{Remark}[section]
\newtheorem{lemma}{Lemma}[section]
\newtheorem{definition}{Definition}[section]
\newtheorem{condition}{Condition}[section]
\newcommand{\pf}{\noindent {\bf Proof:} }
\def\blfootnote{\xdef\@thefnmark{}\@footnotetext}
\title{Concentration of measures via size biased couplings}
\author{Subhankar Ghosh\thanks{Department of Mathematics, University of Southern California, Los Angeles, CA 90089, USA, \texttt{subhankg@usc.edu}}\,\, and Larry Goldstein\thanks{Department of Mathematics, University of Southern California, Los Angeles, CA 90089, USA, \texttt{larry@usc.edu}} \\
\\
{\normalsize{\em University of Southern California } }}

\date{}
\maketitle
 \long\def\symbolfootnote[#1]#2{\begingroup%
\def\thefootnote{\fnsymbol{footnote}}\footnote[#1]{#2}\endgroup}
\symbolfootnote[0]{2000 {\em Mathematics Subject Classification}: Primary 60E15; Secondary 60C05.}
\symbolfootnote[0]{{\em Keywords}: Large deviations, size biased couplings, Stein's method.}

%\begin{keyword}[class=AMS]
%\kwd[Primary ]{60E15\ignore{Inequalities; stochastic orderings}}
%%\kwd{60D05}
%\kwd[; secondary ]{60C05\ignore{combinatorial probability}, 60D05\ignore{Geometric probability, stochastic geometry, random sets}}
%\end{keyword}

\begin{abstract}
Let $Y$ be a nonnegative random variable with mean $\mu$ and finite positive variance $\sigma^2$, and let $Y^s$, defined on the same space as $Y$, have the $Y$ size biased distribution, that is, the distribution
characterized by
$$
E[Yf(Y)]=\mu E f(Y^s) \quad \mbox{for all functions $f$ for which these expectations exist.}
$$
Under a variety of conditions on the coupling of $Y$ and $Y^s$, including combinations of boundedness and monotonicity, concentration of measure inequalities such
as
\beas
P\left(\frac{Y-\mu}{\sigma}\ge t\right)\le \exp\left(-\frac{t^2}{2(A+Bt)}\right) \quad \mbox{for all $t \ge 0$}
\enas
hold for some explicit $A$ and $B$. Examples include the number of relatively ordered subsequences of a random permutation, sliding window statistics including the number of $m$-runs in a sequence of coin tosses, the number of local maximum of a random function on a lattice, the number
of urns containing exactly one ball in an
urn allocation model, the volume covered by the union of $n$ balls placed uniformly over a volume $n$ subset of $\mathbb{R}^d$, the number of bulbs switched on at the terminal time in the so called lightbulb process, and the infinitely divisible and compound Poisson distributions that satisfy a bounded moment generating function condition.
%\keywords{moderate deviations \and size biased coupling \and Stein's method}
%\subclass{60E15\ignore{Inequalities; stochastic orderings}\and 60C05\ignore{combinatorial probability} \and 60D05\ignore{Geometric probability, stochastic geometry, random sets}}

\end{abstract}
\section{Introduction}
Size biasing random variables is essentially sampling them proportional to their size. Of the many contexts in which size biasing appears, perhaps the most well known is the waiting time paradox, so clearly described in Feller \cite{feller2}, Section I.4. Here, a paradox is generated by the fact that in choosing a time interval `at random' in which to wait for, say buses, it is more likely that an interval with a longer interarrival time is selected. In statistical contexts it has long been known that size biasing may affect a random sample in adverse ways, though at times this same phenomena may also be used to correct for certain biases \cite{midzuno}.

In the realm of normal approximation, size biasing finds a place in Stein's method (see, for instance, \cite{stein},  \cite{ChBa} and \cite{CGS}) alongside the exchangeable pair technique. The areas of application of these two techniques are somewhat complementary, with size biasing useful for the approximation of distributions of nonnegative random variables such as counts, and the exchangeable pair for mean zero variates.  Though Stein's method has been used mostly for assessing the accuracy of normal approximation, recently related ideas have proved to be successful in deriving concentration of measure inequalities, that is, deviation inequalities of the form $P(|Y-E(Y)| \ge t\sqrt{\mbox{Var}(Y)})$, where typically one seeks bounds that decay exponentially in $t$; for a guide to the literature on the concentration of measures, see \cite{ledoux} for a detailed overview. Regarding the use of techniques related to Stein's method to prove such inequalities, Rai\v{c} obtained large deviation bounds for certain graph related statistics in \cite{raic} using the Cram\'{e}r transform and Chatterjee \cite{cha} derived Gaussian and Poisson type tail bounds for Hoeffding's combinatorial CLT and the net magnetization in the Curie-Weiss model in statistical physics in \cite{cha}. While the first paper employs the Stein equation, the later applies constructions which are related to the exchangeable pair in Stein's method (see \cite{Stein86}).

For a given nonnegative random variable $Y$ with finite nonzero mean $\mu$, recall (see \cite{golrinott}, for example) that $Y^s$ has the $Y$-size biased distribution if
\bea \label{EWfWchar}
E[Yf(Y)]=\mu E[f(Y^s)]\quad\mbox{for all functions $f$ for which these expectations exist.}
\ena
Motivated by the complementary connections that exist between the exchangeable pair method and size biasing in Stein's method, we prove the following theorem that shows the parallel persists in the area of concentration of measures, and that size biasing can be used to derive one sided deviation results for nonnegative variables $Y$ that can be closely coupled to a variable $Y^s$ with the $Y$ size biased distribution. Our first result requires the coupling to be bounded. Unbounded couplings are considered in Section \ref{sec:unbounded}, where theorems similar in flavour to the concentration results for the number of isolated vertices in the Erd\H{o}s-R\'{e}nyi in \cite{graph} are derived.

\begin{theorem}
\label{thm:main}
Let $Y$ be a nonnegative random variable with mean and variance $\mu$ and $\sigma^2$ respectively, both finite and positive.  Suppose there exists a coupling of $Y$ to a variable $Y^s$ having the $Y$-size bias distribution which satisfies $|Y^s-Y| \le C$ for some $C>0$ with probability one.
\begin{enumerate}
\item[] If $Y^s\ge Y$ with probability one, then
\bea \label{a}
P\left(\frac{Y-\mu}{\sigma}\le -t\right)\le \exp\left(-\frac{t^2}{2A}\right) \quad \mbox{for all $t > 0$, where $A=C\mu/\sigma^2$.}
\ena

\item[]  If  the moment generating function $m(\theta)=E(e^{\theta Y})$ is finite at $\theta=2/C$, then
\bea \label{b}
P\left(\frac{Y-\mu}{\sigma}\ge t\right)\le \exp\left(-\frac{t^2}{2(A+Bt)}\right)\quad\mbox{for all $t > 0$, where $A=C\mu /\sigma^2$ and $B=C/2\sigma$}.
\ena
\end{enumerate}
\end{theorem}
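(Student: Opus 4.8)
The plan is to run the exponential-moment (Chernoff) method, feeding the coupling bound $|Y^s-Y|\le C$ into the size bias identity (\ref{EWfWchar}) with the test function $f(y)=e^{\theta y}$ to obtain a differential inequality for the moment generating function $m(\theta)=E[e^{\theta Y}]$. The starting point is the relation $m'(\theta)=E[Ye^{\theta Y}]=\mu E[e^{\theta Y^s}]$, which holds at every $\theta$ in the interior of the set where $m$ is finite: there one may differentiate under the expectation, and $E[Ye^{\theta Y}]<\infty$ there, so (\ref{EWfWchar}) applies. For part (\ref{a}) we will only use $\theta\le 0$, where $m$ is automatically finite since $Y\ge 0$; for part (\ref{b}), convexity of $m$ together with $m(2/C)<\infty$ forces $m$ to be finite on all of $[0,2/C]$, so the identity is available throughout $[0,2/C)$.

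For the lower tail (\ref{a}): take $\theta<0$. Multiplying $Y\le Y^s\le Y+C$ by the negative number $\theta$ and exponentiating gives $e^{\theta C}e^{\theta Y}\le e^{\theta Y^s}\le e^{\theta Y}$, hence $m'(\theta)\ge \mu e^{\theta C}m(\theta)$, i.e. $(\log m)'(\theta)\ge \mu e^{\theta C}$. Integrating over $[\theta,0]$ and using $e^x\le 1+x+x^2/2$ for $x\le 0$ gives $\log m(\theta)\le \frac{\mu}{C}(e^{\theta C}-1)\le \mu\theta+\tfrac12\mu C\theta^2$. Then $P((Y-\mu)/\sigma\le -t)\le e^{-\theta(\mu-t\sigma)}m(\theta)\le \exp(\theta t\sigma+\tfrac12\mu C\theta^2)$, and minimizing the exponent over $\theta<0$ at $\theta=-t\sigma/(\mu C)$ yields $\exp(-t^2\sigma^2/(2\mu C))=\exp(-t^2/(2A))$ with $A=C\mu/\sigma^2$.

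For the upper tail (\ref{b}): take $\theta\in[0,2/C)$. Using only $Y^s\le Y+C$ gives $e^{\theta Y^s}\le e^{\theta C}e^{\theta Y}$, hence $(\log m)'(\theta)\le \mu e^{\theta C}$, and integrating over $[0,\theta]$ gives the Bennett-type bound $\log m(\theta)\le\frac{\mu}{C}(e^{\theta C}-1)$. The key step is the conversion to Bernstein form via the elementary inequality $e^x-1\le x/(1-x/2)$ on $[0,2)$, which upgrades the estimate to $\log m(\theta)-\mu\theta\le \tfrac12(\mu C)\theta^2/(1-(C/2)\theta)$ — exactly the Bernstein condition with variance parameter $v=\mu C$ and scale parameter $c=C/2$. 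The standard Chernoff optimization (choose $\theta=t\sigma/(\mu C+(C/2)t\sigma)$, which lies in $[0,2/C)$) then gives $P((Y-\mu)/\sigma\ge t)\le\exp(-t^2\sigma^2/(2(\mu C+(C/2)t\sigma)))=\exp(-t^2/(2(A+Bt)))$ with $A=C\mu/\sigma^2$ and $B=C/(2\sigma)$.

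I expect the main obstacle to be bookkeeping rather than a new idea: making sure $m$ and $m'$ are finite and that (\ref{EWfWchar}) applies on the relevant $\theta$-interval, in particular controlling the behaviour as $\theta\uparrow 2/C$ (handled by working on $[0,2/C)$ and passing to the limit), plus verifying the two scalar inequalities used to pass from $e^{\theta C}-1$ to the quadratic and Bernstein forms. One could instead optimize in Bennett form, minimizing $\theta(\mu+t\sigma)-\frac{\mu}{C}(e^{\theta C}-1)$ over $\theta\in[0,2/C]$ directly and then invoking $(1+\lambda)\log(1+\lambda)-\lambda\ge\lambda^2/(2+\lambda)$; the route through $e^x-1\le x/(1-x/2)$ is preferable because it avoids splitting into cases according to whether the unconstrained optimizer falls in $[0,2/C]$.
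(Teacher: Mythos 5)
Your proof is correct and takes a route that differs meaningfully from the paper's. The paper's engine is the convexity inequality (\ref{ineq-main}), which yields $|e^{\theta Y^s}-e^{\theta Y}| \le \tfrac{C|\theta|}{2}\bigl(e^{\theta Y^s}+e^{\theta Y}\bigr)$; this is then rearranged for $\theta\in(0,2/C)$ into $E[e^{\theta Y^s}] \le \tfrac{1+C\theta/2}{1-C\theta/2}E[e^{\theta Y}]$, and for $\theta<0$ into $E[e^{\theta Y^s}]\ge(1+C\theta)E[e^{\theta Y}]$ after monotonicity is used to replace $e^{\theta Y^s}+e^{\theta Y}$ by $2e^{\theta Y}$. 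You instead exponentiate $|Y^s-Y|\le C$ directly, obtaining $(\log m)'(\theta)\le\mu e^{C\theta}$ for $\theta>0$ (and the reverse inequality for $\theta<0$), integrate to the Bennett-type intermediate $\log m(\theta)\le\tfrac{\mu}{C}(e^{C\theta}-1)$, and then pass to Bernstein form via the elementary scalar inequalities $e^x\le 1+x+x^2/2$ on $(-\infty,0]$ and $e^x-1\le x/(1-x/2)$ on $[0,2)$. Both approaches land on the identical estimate $\log m(\theta)-\mu\theta\le\tfrac{\mu C\theta^2/2}{1-C\theta/2}$ and hence the same constants $A$ and $B$; your Bennett-form intermediate is in fact slightly sharper, and you bypass the convexity lemma entirely. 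A bonus worth flagging: your derivation of (\ref{a}) uses only the one-sided bound $Y^s\le Y+C$, which already follows from $|Y^s-Y|\le C$ --- you never invoke $Y^s\ge Y$. The paper's proof of (\ref{a}) does genuinely use monotonicity, so your route in fact establishes (\ref{a}) under the boundedness hypothesis alone, a modest but real strengthening of the stated result.
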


The monotonicity hypothesis for inequality (\ref{a}), that $Y^s\ge Y$, is natural since $Y^s$ is stochastically larger than $Y$. Therefore there always exists a coupling for which $Y^s\ge Y$. There is no guarantee, however, that for
such a monotone coupling, the difference $Y^s-Y$ is bounded. For (\ref{b}) we note that the moment generating function is
finite everywhere when $Y$ is bounded. In typical examples the variable $Y$ is indexed by $n$, and the ones we consider have the property that the ratio $\mu/\sigma^2$ remains bounded as $n \rightarrow \infty$, and $C$ does not depend on $n$.
In such cases the bound in (\ref{a}) decreases at rate $\exp(-ct^2)$ for some $c>0$, and if $\sigma\rightarrow\infty$ as $n\rightarrow\infty$, the bound in (\ref{b}) is of similar order, asymptotically.

Examples covered by Theorem \ref{thm:main} are given in Section \ref{sec:bndd}, and include the number of relatively ordered subsequences of a random permutation, sliding window statistics including the number of $m$-runs in a sequence of coin tosses, the number of local maximum of a random function on the lattice, the number
of urns containing exactly one ball in the uniform urn allocation model, the volume covered by the union of $n$ balls placed uniformly over a volume $n$ subset of $\mathbb{R}^d$, and the number of bulbs switched on at the terminal time in the so called lightbulb problem.

In Section \ref{sec:unbounded} we also consider some cases where the coupling of $Y^s$ and $Y$ is unbounded, handled on a somewhat case by case basis. Concentration bounds for the number of isolated vertices in the Erd\H{o}s-R\'{e}nyi random graph model, which also falls under this category have been derived in \cite{graph}. In this paper we discuss some further examples including the
infinitely divisible and compound Poisson distributions.  As Theorem \ref{thm:main} shows, additional information is available when the coupling is monotone; this condition holds for the $m$ runs, lightbulb examples, as well as the  infinitely divisible and compound Poisson distributions considered.

A number of results in Stein's method for normal approximation rest on the fact that if a variable $Y$ of interest can be closely coupled to some related variable, then the distribution of $Y$ is close to normal. An advantage, therefore, of the Stein method is that dependence can be handled in a direct manner, by the construction of couplings on the given collection of random variables related to $Y$. In \cite{raic} and \cite{cha}, ideas related to Stein's method were used to obtain concentration of measure inequalities in the presence of dependence.

Of the two, the technique used by Chatterjee in \cite{cha}, based on Stein's exchangeable pair \cite{Stein86}, is the one closer to the approach taken here. We say $Y,Y'$ is a $\lambda$-Stein pair if these variables are exchangeable and satisfy the linearity condition
\bea \label{linlambda}
E(Y-Y'|Y)=\lambda Y \quad\mbox{for some $\lambda\in (0,1)$}.
\ena
The $\lambda$-Stein pair is clearly the special case of the more general identity
\beas
E(F(Y,Y')|Y)=f(Y) \quad \mbox{for some antisymmetric function $F$,}
\enas
specialized to $F(Y,Y')=Y-Y'$ and $f(y)=\lambda y$.
Chatterjee in \cite{cha} considers a pair of variables satisfying this more general identity, and, with
\beas
\Delta(Y)=\frac{1}{2}E((f(Y)-f(Y'))F(Y,Y')|Y),
\enas
obtains a concentration of measure inequality for $Y$ under the assumption that $\Delta(Y)\le Bf(Y)+C$ for some constants $B$ and $C$.

For normal approximation, as seems to be the case here also, the areas in which pair couplings such as (\ref{linlambda}) apply, and those for which size bias coupling of Theorem \ref{thm:main} succeed, appear to be somewhat disjoint. In particular, (\ref{linlambda}) seems to be more suited to variables which arise with mean zero, while the size bias couplings work well for variables, such as counts, which are necessarily nonnegative. Indeed, for the problems we consider, there appears to be no natural way by which to find exchangeable pairs satisfying the conditions of \cite{cha}. On the other hand, the size bias couplings applied here are easy to obtain.

After proving Theorem \ref{thm:main} in Section \ref{sec:main}, in Section \ref{sec:construction} we review the methods in \cite{golrinott} for the construction of size bias couplings in the presence of dependence, and then move to the examples
already mentioned.

\section{Proof of the main result}
\label{sec:main}
In the sequel we make use of the following inequality, which depends on
the convexity of the exponential function;
\bea\label{ineq-main}
\frac{e^y-e^x}{y-x}=\int_0^1e^{ty+(1-t)x}dt \le \int_0^1(te^y+(1-t)e^x)dt =\frac{e^y+e^x}{2} \quad \mbox{for all $x \not = y$.}
\ena
We now move to the proof of Theorem \ref{thm:main}.
\begin{proof}
Recall $Y^s$ is given on the same space as $Y$, and has the $Y$ size biased distribution. By (\ref{ineq-main}), for all $\theta \in \mathbb{R}$, since $|Y^s-Y| \le C$,
\bea\label{ineq-exp-gen}
|e^{\theta Y^s}-e^{\theta Y}| \le \frac{1}{2}|\theta(Y^s-Y)|(e^{\theta Y^s}+e^{\theta Y})\le \frac{C|\theta|}{2}(e^{\theta Y^s}+e^{\theta Y}).
\ena
Recalling that if the moment generating function $m(\theta)=E[e^{\theta Y}]$ exists in an open interval containing $\theta$ then we may differentiate under the expectation, we obtain
\bea \label{diffunder}
m'(\theta)=E[Ye^{\theta Y}]= \mu E[e^{\theta Y^s}].
\ena

To prove (\ref{a}), let $\theta<0$ and note that since the coupling is monotone $\exp(\theta Y^s)\le \exp(\theta Y)$.
Now (\ref{ineq-exp-gen}) yields
\beas
e^{\theta Y}-e^{\theta Y^s}\le {C|\theta|}e^{\theta Y}.
\enas
Since $Y \ge 0$ the moment generating function $m(\theta)$ exists for all $\theta < 0$, so taking expectation and rearranging yields
\beas
%\label{bd-Ws-W}
Ee^{\theta Y^s} \ge (1-C|\theta|)Ee^{\theta Y}=(1+C\theta)E(e^{\theta Y}),
\enas
and now, by (\ref{diffunder}),
\bea\label{control-left}
m'(\theta)\ge \mu(1+C\theta)m(\theta)\quad\mbox{for all $\theta<0$}.
\ena
To consider standardized deviations of $Y$, that is, deviations of $|Y-\mu|/\sigma$, let
\bea
M(\theta)=Ee^{\theta(Y-\mu)/\sigma}=e^{-\theta \mu/\sigma}m(\theta/\sigma).
\label{def-M-theta}
\ena
Now rewriting (\ref{control-left}) in terms of $M(\theta)$, we obtain for all $\theta<0$,
\bea
M'(\theta) &=& -(\mu/\sigma) e^{-\theta \mu/\sigma}m(\theta/\sigma) +  e^{-\theta \mu/\sigma}m'(\theta/\sigma)/\sigma\nonumber\\
           &\ge& -(\mu/\sigma) e^{-\theta \mu/ \sigma}m(\theta/\sigma) + (\mu/\sigma) e^{-\theta \mu/\sigma}\left( 1+\frac{C\theta}{\sigma} \right) m(\theta/\sigma)\nonumber\\
           &=& (\mu/\sigma^2) C\theta M(\theta)\label{bd-growth-neg-M}.
\ena

Since $M(0)=1$, by (\ref{bd-growth-neg-M})
\beas
-\log M(\theta) = \int_\theta^0 \frac{M'(s)}{M(s)}ds \ge \int_\theta^0 \frac{C\mu s}{\sigma^2}ds = -\frac{C \mu \theta^2}{2 \sigma^2},
\enas
so exponentiation gives us
\beas
M(\theta)\le \exp\left(\frac{C\mu\theta^2}{2\sigma^2}\right)\quad\mbox{when $\theta<0$}.
\enas
Hence for a fixed $t > 0$, for all $\theta<0$,
\bea
\nn P\left(\frac{Y-\mu}{\sigma}\le -t\right) = P\left(\theta \left( \frac{Y-\mu}{\sigma} \right) \ge -\theta t\right)
&=& P\left(e^{\theta \left( \frac{Y-\mu}{\sigma} \right)} \ge e^{-\theta t} \right)\\
&\le& e^{\theta t}M(\theta)\le \exp\left(\theta t+\frac{C\mu\theta^2}{2\sigma^2}\right).\label{bd-conc-neg}
\ena
Substituting $\theta=-t\sigma^2/(C\mu)$ into (\ref{bd-conc-neg}) completes the proof of (\ref{a}).

Moving on to the proof of (\ref{b}),  taking expectation in (\ref{ineq-exp-gen}) with $\theta>0$, we obtain
\beas
Ee^{\theta Y^s}-Ee^{\theta Y} \le \frac{C\theta}{2}\left( Ee^{\theta Y^s}+Ee^{\theta Y}\right),
\enas
so in particular, when $0<\theta < 2/C$,
\bea\label{exp-size-bias}
E[e^{\theta Y^s}] \le \left( \frac{1+C \theta/2}{1-C \theta/2} \right) E[e^{\theta Y}].
\ena
As $m(2/C)<\infty$, (\ref{diffunder}) applies and (\ref{exp-size-bias}) yields
\bea \label{mprimebound}
m'(\theta) \le \mu \left( \frac{1+C \theta/2}{1-C \theta/2} \right) m(\theta)\quad\mbox{for all $0<\theta<2/C$}.
\ena
Now letting $\theta \in (0,2\sigma/C)$, from (\ref{def-M-theta}), $M(\theta)$ is differentiable for all $\theta < 2\sigma/C$ and
(\ref{mprimebound}) yields,
\beas
M'(\theta) &=& -(\mu/\sigma) e^{-\theta \mu/\sigma}m(\theta/\sigma) +  e^{-\theta \mu/\sigma}m'(\theta/\sigma)/\sigma\\
           &\le& -(\mu/\sigma) e^{-\theta \mu/\sigma}m(\theta/\sigma) + (\mu/\sigma) e^{-\theta \mu/\sigma}\left( \frac{1+C \theta/(2\sigma)}{1-C \theta/(2\sigma)} \right) m(\theta/\sigma)\\
           &=& (\mu/\sigma) e^{-\theta \mu/\sigma}m(\theta/\sigma) \left(\left( \frac{1+C \theta/(2\sigma)}{1-C \theta/(2\sigma)} \right)-1\right)\\
           &=& (\mu/\sigma^2) \left( \frac{C\theta}{1-C\theta/(2\sigma)}\right)M(\theta).
\enas

Dividing by $M(\theta)$ we may rewrite the inequality as
\beas
\frac{d}{d\theta}\log M(\theta)\le (\mu/\sigma^2) \left( \frac{C\theta}{1-C\theta/(2\sigma)}\right).
\enas
Noting that $M(0)=1$, setting $A=C\mu/\sigma^2$ and $B=C/(2\sigma)$, integrating we obtain
\beas
\log M(\theta) &=& \int_0^\theta \frac{d}{ds}\log M(s)\, ds \le (\mu/\sigma^2)  \int_0^\theta  \left( \frac{Cs}{1-B\theta} \right)ds
= (\mu/\sigma^2 ) \frac{C\theta^2}{2(1-B\theta)}
=\frac{A\theta^2}{2(1-B\theta)}.
\enas
Hence, for $t>0$,
\beas
%\label{markov}
P\left(\frac{Y-\mu}{\sigma}\ge t\right)=P\left(\theta ( \frac{Y-\mu}{\sigma}) \ge\theta t\right)=P\left(e^{\theta\left(\frac{Y-\mu}{\sigma}\right)}\ge e^{\theta t}\right)\le e^{-\theta t}M(\theta)\le e^{-\theta t}\exp\left(\frac{A\theta^2}{2(1-B\theta)}\right).
\enas
Noting that $\theta=t/(A+Bt)$ lies in $(0,2\sigma/C)$ for all $t>0$, substituting
this value yields the bound
\beas
P\left(\frac{Y-\mu}{\sigma}\ge t\right)< \exp\left(-\frac{t^2}{2(A+Bt)}\right) \quad \mbox{for all $t>0$,}
\enas
completing the proof.
\end{proof}

\section{Construction of size bias couplings}
\label{sec:construction}
In this section we will review the discussion in \cite{golrinott} which gives a procedure for a construction of size bias couplings when $Y$ is a sum; the method has its roots in the work of Baldi et al. \cite{baldi}.
The construction depends on being able to size bias a collection of nonnegative random variables in a given coordinate, as described in the following definition.
Letting $F$ be the distribution of $Y$, first note that the characterization (\ref{EWfWchar}) of the size bias distribution $F^s$ is equivalent
to the specification of $F^s$ by its Radon Nikodym derivative
\bea
dF^s(x)=\frac{x}{\mu}dF(x).\label{sizebias-df}
\ena
\begin{definition}
\label{sb-coordinate-i} Let ${\cal A}$ be an arbitrary index set and let $\{X_\alpha:\alpha\in {\cal A}\}$ be a collection of nonnegative random variables with finite, nonzero expectations $EX_\alpha=\mu_\alpha$ and joint distribution $dF(\mathbf{x})$. For $\beta\in{\cal A}$, we say that $\mathbf{X}^\beta=\{X^\beta_\alpha:\alpha\in {\cal A}\}$ has the $\mathbf{X}$ size bias distribution in coordinate $\beta$ if $\mathbf{X}^\beta$ has joint distribution
$$
dF^\beta(\mathbf{x})=x_\beta dF(\mathbf{x})/\mu_\beta.
$$
\end{definition}
Just as (\ref{sizebias-df}) is related to (\ref{EWfWchar}), the random vector $\mathbf{X}^\beta$ has the $\mathbf{X}$ size bias distribution in coordinate $\beta$ if and only if
\beas
%\label{sizebias-dir-character}
E[X_\beta f(\mathbf{X})]=\mu_\beta E[f(\mathbf{X}^\beta)] \quad \mbox{for all functions $f$ for which these expectations exist.}
\enas
Now letting $f(\mathbf{X})=g(X_\beta)$ for some function $g$ one recovers (\ref{EWfWchar}),
showing that the $\beta^{th}$ coordinate of $\mathbf{X}^\beta$, that is, $X^\beta_\beta$, has the $X_\beta$ size bias distribution.

The factorization
$$
P(\mathbf{X} \in d \mathbf{x})=P(\mathbf{X} \in d\mathbf{x}|X_\beta=x) P(X_\beta\in dx)
$$
of the joint distribution of $\mathbf{X}$ suggests a way to construct $\mathbf{X}$. First generate $X_\beta$, a variable with distribution $P(X_\beta\in dx)$. If $X_\beta=x$, then generate the remaining
variates $\{X^\beta_\alpha,\alpha \not = \beta\}$ with distribution $P(\mathbf{X}\in d\mathbf{x}|X_\beta=x)$.
Now, by the factorization of $dF({\bf x})$, we have
\bea \label{dFsfactors}
dF^\beta(\mathbf{x})=x_\beta dF(\mathbf{x})/\mu_\beta =P(\mathbf{X} \in d\mathbf{x}|X_\beta=x) x_\beta P(X_\beta\in dx)/\mu_\beta = P(\mathbf{X} \in d\mathbf{x}|X_\beta=x) P(X^\beta_\beta\in dx).
\ena
Hence, to generate $\mathbf{X}^\beta$ with distribution $dF^\beta$, first generate a variable $X^\beta_\beta$ with the
$X_\beta$ size bias distribution, then, when $X_\beta^\beta=x$, generate the remaining variables according to their original conditional distribution given that the $\beta^{th}$ coordinate takes on the value $x$.

Definition \ref{sb-coordinate-i} and the following proposition from Section 2 of \cite{golrinott} will be applied
in the subsequent constructions; the reader is referred there for the simple proof.

\begin{proposition}
\label{prop-golrinott}
Let ${\cal A}$ be an arbitrary index set, and let ${\bf X}=\{X_{\alpha}, \alpha \in {\cal A}\}$ be a collection of nonnegative random variables with finite means. For any subset $B \subset {\cal A}$, set
$$
X_B = \sum_{\beta \in  B}X_{\beta} \quad \mbox{and} \quad \mu_B=EX_B.
$$
Suppose $B \subset {\cal A}$ with $0<\mu_B< \infty$,
and for $\beta \in B$  let ${\bf X}^{\beta}$ have the ${\bf X}$-size biased distribution in coordinate $\beta$ as in Definition \ref{sb-coordinate-i}.
If ${\bf X}^B$ has the mixture distribution
$$
{\cal L}({\bf X}^B)=\sum_{\beta \in B} \frac{\mu_\beta}{\mu_B}{\cal L}({\bf X}^\beta),
$$
then
\beas
%\label{nu}
EX_B f({\bf X})=\mu_B Ef({\bf X}^B)
\enas
for all real valued functions $f$ for which these expectations exist.
Hence, for any $A \subset {\cal A}$, if $f$ is
a function of $X_A=\sum_{\alpha \in A}X_\alpha$ only,
\begin{equation}
\label{notasn}
EX_B f(X_A)=\mu_B Ef(X_A^B) \quad \mbox{where} \quad X_A^B= \sum_{\alpha \in A} X_{\alpha} ^B.
\end{equation}
Taking $A=B$ in (\ref{notasn}) we have $EX_Af(X_A)=\mu_A Ef(X_A^A)$,
and hence $X_A^A$ has the $X_A$-size biased distribution, as in (\ref{EWfWchar}).
\end{proposition}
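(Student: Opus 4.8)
The plan is to reduce the statement to the defining identity of the size bias distribution in a single coordinate. By Definition \ref{sb-coordinate-i}, for each $\beta\in B$ we have $E[X_\beta f(\mathbf{X})]=\mu_\beta E[f(\mathbf{X}^\beta)]$ for every $f$ for which the expectations exist. The first assertion then follows by summing this identity over $\beta\in B$ and recognizing the resulting convex combination as the mixture law defining $\mathbf{X}^B$.

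Concretely, I would first write $X_B f(\mathbf{X})=\sum_{\beta\in B}X_\beta f(\mathbf{X})$, take expectations, and interchange the (possibly infinite) sum with the expectation to get $E[X_B f(\mathbf{X})]=\sum_{\beta\in B}\mu_\beta E[f(\mathbf{X}^\beta)]$. This interchange is the one point needing a word of justification: when $f\ge 0$ it is immediate from Tonelli, and in general one decomposes $f=f^+-f^-$ and dominates the partial sums using $0\le X_\beta\le X_B$ together with the standing assumption that $E[X_B f(\mathbf{X})]$ (hence $E[X_B|f(\mathbf{X})|]$) is finite. I would then insert the normalization, $\sum_{\beta\in B}\mu_\beta E[f(\mathbf{X}^\beta)]=\mu_B\sum_{\beta\in B}(\mu_\beta/\mu_B)E[f(\mathbf{X}^\beta)]$, and observe that by the definition of the mixture law $\mathcal{L}(\mathbf{X}^B)=\sum_{\beta\in B}(\mu_\beta/\mu_B)\mathcal{L}(\mathbf{X}^\beta)$ the bracketed quantity is exactly $E[f(\mathbf{X}^B)]$. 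This gives $E[X_B f(\mathbf{X})]=\mu_B E[f(\mathbf{X}^B)]$.

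For the statement involving a function of $X_A$ only, I would apply the identity just proved to the map $\mathbf{x}\mapsto f\!\left(\sum_{\alpha\in A}x_\alpha\right)$ in place of $f$; evaluated at $\mathbf{X}^\beta$ this equals $f(X_A^\beta)$ with $X_A^\beta=\sum_{\alpha\in A}X_\alpha^\beta$, and pushing the mixture law of $\mathbf{X}^B$ forward under $\mathbf{x}\mapsto\sum_{\alpha\in A}x_\alpha$ yields precisely the mixture law of $X_A^B=\sum_{\alpha\in A}X_\alpha^B$. Hence $E[X_B f(X_A)]=\mu_B E[f(X_A^B)]$, which is (\ref{notasn}). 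Specializing $A=B$ gives $E[X_A f(X_A)]=\mu_A E[f(X_A^A)]$ for all admissible $f$, and comparing with (\ref{EWfWchar}) identifies the law of $X_A^A$ as the $X_A$-size biased distribution.

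The only genuine obstacle is the bookkeeping around interchanging summation and expectation when $\mathcal{A}$ is infinite and $f$ is unbounded; once that is handled, the proposition is a direct substitution of the single-coordinate identity into the definition of the mixture, which is why it is stated with reference to the short argument in \cite{golrinott}.
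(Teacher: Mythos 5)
The paper does not actually reproduce a proof of Proposition~\ref{prop-golrinott}; it defers to Section~2 of \cite{golrinott} for ``the simple proof.'' Your argument---summing the single-coordinate identity $E[X_\beta f(\mathbf{X})]=\mu_\beta E[f(\mathbf{X}^\beta)]$ over $\beta\in B$, recognizing the mixture, and then pushing forward under $\mathbf{x}\mapsto\sum_{\alpha\in A}x_\alpha$---is precisely that standard argument, with the summation/expectation interchange handled in the usual way, so it is correct and matches the intended proof.
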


In our examples we use Proposition \ref{prop-golrinott} and (\ref{dFsfactors}) to obtain a variable $Y^s$ with the size bias distribution of $Y$, where $Y=\sum_{\alpha \in A} X_\alpha$, as follows. First choose a random index $I \in A$ with probability
$$
P(I=\alpha)=\mu_\alpha/\mu_A,  \quad \mbox{$\alpha \in A$.}
$$
Next generate $X^I_I$ with the size bias distribution of $X_I$. If $I=\alpha$ and $X_\alpha^\alpha=x$, generating $\{X_\beta^\alpha: \beta \in A \setminus \{\alpha\}\}$ using the (original) conditional distribution
\beas
P(X_\beta, \beta \not = \alpha | X_\alpha=x),
\enas
the sum $Y^s=\sum_{\alpha \in A} X_\alpha^I$ has the $Y$ size biased distribution.

\section{Applications: bounded couplings}
\label{sec:bndd}
We now consider the application of Theorem \ref{thm:main} to derive concentration of measure results for the number of relatively ordered subsequences of a random permutation, the number of $m$-runs in a sequence of coin tosses, the number of local extrema on a graph, the number of nonisolated balls in an urn allocation model, the covered volume in binomial coverage process, and the number of bulbs lit at the terminal time in the so called lightbulb process. Without further mention we will use the fact that when (\ref{a}) and (\ref{b}) hold for some $A$ and $B$ then they also hold when these values are replaced by any larger ones, which may also be denoted by $A$ and $B$.

\subsection{Relatively ordered sub-sequences of a random
permutation}
For $n \ge m \ge 3$, let $\pi$ and $\tau$ be
permutations of ${\cal V}=\{1,\ldots,n\}$ and $\{1,\ldots,m\}$, respectively,
and let
$$
{\cal V}_\alpha = \{\alpha,\alpha+1,\ldots,\alpha+m-1\} \quad \mbox{for $\alpha \in {\cal V}$,}
$$
where addition of elements of ${\cal V}$ is modulo $n$.
We say the pattern $\tau$ appears at location $\alpha \in {\cal V}$ if
the values $\{\pi(v)\}_{v \in {\cal V}_\alpha}$ and
$\{\tau(v)\}_{v \in {\cal V}_1}$ are in the same relative order.
Equivalently, the pattern $\tau$ appears at $\alpha$ if
and only if $\pi(\tau^{-1}(v)+\alpha-1), v  \in {\cal V}_1$ is an
increasing sequence. When $\tau=\iota_m$, the identity permutation of length $m$, we
say that $\pi$ has a rising sequence of length $m$ at position
$\alpha$. Rising sequences are studied in
\cite{BaDia} in connection with card tricks and card shuffling.

Letting $\pi$ be chosen uniformly from all permutations of $\{1,\ldots,n\}$, and
$X_\alpha$ the indicator
that $\tau$ appears at $\alpha$,
$$
X_\alpha(\pi(v), v \in {\cal V}_\alpha) = 1(\pi(\tau^{-1}(1)+\alpha -1 ) < \cdots <
\pi(\tau^{-1}(m)+\alpha-1)),
$$
the sum $Y=\sum_{\alpha \in {\cal V}}
X_\alpha$ counts the number of $m$-element-long segments of $\pi$
that have the same relative order as $\tau$.

For $\alpha \in {\cal V}$ we may generate ${\bf
X}^\alpha=\{X_\beta^\alpha, \beta \in {\cal V}\}$ with the ${\bf X}=\{X_\beta, \beta \in {\cal V}\}$
distribution size biased in direction $\alpha$, following \cite{goldstein1}.
Let
$\sigma_\alpha$ be the permutation of $\{1,\ldots,m\}$ for which
$$
\pi(\sigma_\alpha(1)+\alpha-1)<\cdots < \pi(\sigma_\alpha(m)+\alpha-1),
$$
and set
\beas
\pi^\alpha(v)=\left\{
\begin{array}{cl}
\pi(\sigma_\alpha(\tau(v-\alpha+1))+\alpha-1), & v \in {\cal V}_\alpha\\
\pi(v) & v \not \in {\cal V}_\alpha.
\end{array}
\right.
\enas
In other words $\pi^\alpha$ is the permutation
$\pi$ with the values $\pi(v), v \in {\cal V}_\alpha$ reordered so that
$\pi^\alpha(\gamma)$ for $\gamma \in {\cal V}_\alpha$ are
in the same relative order as $\tau$. Now let
$$
X_\beta^\alpha=X_\beta(\pi^\alpha(v), v \in {\cal V}_\beta),
$$
the indicator that $\tau$ appears at position $\beta$ in
the reordered permutation $\pi^\alpha$. As $\pi^\alpha$ and $\pi$ agree except
perhaps for the $m$ values in ${\cal V}_\alpha$, we have
$$
X_\beta^\alpha=X_\beta(\pi(v), v\in {\cal V}_\beta) \quad \mbox{for all $|\beta-\alpha| \ge m$.}
$$
Hence, as
\beas %\label{Cperm}
|Y^\alpha-Y| \le \sum_{|\beta-\alpha|\le m-1} |X_\beta^\alpha - X_\beta| \le 2m-1.
\enas
we may take $C=2m-1$ as the almost sure bound on the coupling of $Y^s$ and $Y$.

Regarding the mean $\mu$ of $Y$, clearly for any $\tau$, as all relative orders of $\pi(v), v \in {\cal V}_\alpha$ are equally likely,
\beas %\label{mu:rising}
EX_\alpha=1/m! \quad \mbox{and therefore} \quad \mu=n/m!.
\enas
To compute the variance, for $0 \le k \le m-1$, let $I_k$ be the indicator that $\tau(1),\ldots,\tau(m-k)$ and $\tau(k+1),\ldots,\tau(m)$ are in the same relative order. Clearly $I_0=1$, and for rising sequences,
as $\tau(j)=j$, $I_k=1$ for all $k$. In general for $0 \le k \le m-1$ we have $X_\alpha X_{\alpha+k}=0$
if $I_k=0$, as the joint event in this case demands two different relative orders on the segment of $\pi$
of length $m-k$ of which both $X_\alpha$ and $X_{\alpha+k}$ are a function. If $I_k=1$ then a given, common,
relative order is demanded for this same length of $\pi$, and relative orders also for the two segments of length $k$ on which exactly one of $X_\alpha$ and $X_\beta$ depend, and so, in total a relative order on $m-k+2k=m+k$ values of $\pi$,
and therefore
\beas
EX_\alpha X_{\alpha+k}=I_k/(m+k)! \quad \mbox{and} \quad \mbox{Cov}(X_\alpha,X_{\alpha+k})=I_k/(m+k)!-1/(m!)^2.
\enas
As the relative orders of non-overlapping segments of $\pi$ are independent, now taking $n \ge 2m$, the variance $\sigma^2$ of $Y$ is given by
\beas
\sigma^2 &=& \sum_{\alpha \in {\cal V}}\mbox{Var}(X_\alpha) + \sum_{\alpha \not = \beta}\mbox{Cov}(X_\alpha,X_\beta)\\
&=& \sum_{\alpha \in {\cal V}}\mbox{Var}(X_\alpha) + \sum_{\alpha \in {\cal V}}\sum_{\beta:1 \le |\alpha - \beta| \le m-1}\mbox{Cov}(X_\alpha,X_\beta)\\
&=& \sum_{\alpha \in {\cal V}}\mbox{Var}(X_\alpha) + 2\sum_{\alpha \in {\cal V}} \sum_{k=1}^{m-1}\mbox{Cov}(X_\alpha,X_{\alpha+k})\\
&=& n\mbox{Var}(X_1) + 2n \sum_{k=1}^{m-1}\mbox{Cov}(X_1,X_{1+k})\\
&=& n\left(\frac{1}{m!}-\frac{1}{(m!)^2}\right) + 2n \sum_{k=1}^{m-1}\left( \frac{I_k}{(m+k)!}-(\frac{1}{m!})^2\right)\\
&=& n\left( \frac{1}{m!}\left(1-\frac{2m-1}{m!}\right)+2 \sum_{k=1}^{m-1} \frac{I_k}{(m+k)!} \right).
\enas
Clearly $\mbox{Var}(Y)$ is maximized for the identity permutation $\tau(k)=k,k=1,\ldots,m$, as $I_m=1$ for all $1 \le m \le m-1$, and as mentioned, this case corresponds to counting the number of rising sequences. In contrast, the variance lower bound
\beas %\label{sigbd:perm}
\sigma^2 \ge \frac{n}{m!}\left(1-\frac{2m-1}{m!}\right)
\enas
is attained at the permutation
\beas
\tau(j) =\left\{
\begin{array}{cl}
1 & j=1\\
j+1 & 2 \le j \le m-1\\
2 & j=m
\end{array}
\right.
\enas
which has $I_k=0$ for all $1 \le k \le m-1$.
In particular, the bound (\ref{b}) of Theorem \ref{thm:main} holds with
\beas
A=\frac{2m-1}{1-\frac{2m-1}{m!}} \quad \mbox{and} \quad B=\frac{2m-1}{2\sqrt{\frac{n}{m!}\left(1-\frac{2m-1}{m!}\right)}}.
\enas

\subsection{Local Dependence}
The following lemma shows how to construct a collection of variables ${\bf X}^\alpha$ having the ${\bf X}$ distribution biased in direction $\alpha$ when $X_\alpha$ is some function of a subset of a collection of independent random variables.
\begin{lemma}
\label{lem:loc}
Let $\{C_g, g \in {\cal V}\}$ be a collection of independent random variables, and for each $\alpha \in {\cal V}$ let ${\cal V}_\alpha \subset {\cal V}$ and $X_\alpha=X_\alpha(C_g,g \in {\cal V}_\alpha)$ be a nonnegative random variable with a nonzero, finite expectation. Then if $\{C_g^\alpha, g \in {\cal V}_\alpha\}$ has distribution
$$
dF^\alpha(c_g,g \in {\cal V}_\alpha)=
\frac{X_\alpha(c_g,g \in {\cal V}_\alpha)}{EX_\alpha(C_g,g \in {\cal V}_\alpha)}dF(c_g,g\in {\cal V}_\alpha)
$$
and is independent of $\{C_g, g \in {\cal V}\}$, letting
\beas
X_\beta^\alpha=X_\beta(C_g^\alpha, g \in {\cal V}_\beta \cap {\cal V}_\alpha, C_g, g \in {\cal V}_\beta \cap {\cal V}_\alpha^c),
\enas
the collection ${\bf X}^\alpha=\{X_\beta^\alpha, \beta \in {\cal V}\}$ has the ${\bf X}$ distribution biased in direction $\alpha$.

Furthermore, with $I$ chosen proportional to $EX_\alpha$, independent of the remaining variables, the sum
$$
Y^s=\sum_{\beta \in {\cal V}}X_\beta^I
$$
has the $Y$ size biased distribution, and when there exists $M$ such that $X_\alpha \le M$ for all $\alpha$,
\bea \label{def:b}
|Y^s-Y| \le bM \quad \mbox{where} \quad b=\max_\alpha |\{\beta: {\cal V}_\beta \cap {\cal V}_\alpha \not = \emptyset\}|.
\ena
\end{lemma}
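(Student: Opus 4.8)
The plan is to verify the two distributional claims in turn, and then read off the bound on $|Y^s - Y|$ as an immediate consequence of local dependence.

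First I would check that ${\bf X}^\alpha = \{X_\beta^\alpha, \beta \in {\cal V}\}$ has the ${\bf X}$ distribution size biased in coordinate $\alpha$, in the sense of Definition \ref{sb-coordinate-i}; equivalently, that $E[X_\alpha f({\bf X})] = EX_\alpha \cdot E[f({\bf X}^\alpha)]$ for all $f$ for which the expectations exist. The key observation is that ${\bf X}^\alpha$ is built by replacing the independent block $\{C_g, g \in {\cal V}_\alpha\}$ with a tilted block $\{C_g^\alpha, g \in {\cal V}_\alpha\}$ whose law has Radon--Nikodym derivative $X_\alpha(c_g, g \in {\cal V}_\alpha)/EX_\alpha$ with respect to the original law, while leaving $\{C_g, g \in {\cal V} \setminus {\cal V}_\alpha\}$ untouched and independent. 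Writing $f({\bf X})$ as a function of $(C_g, g \in {\cal V}_\alpha)$ and $(C_g, g \in {\cal V} \setminus {\cal V}_\alpha)$ and using independence of these two groups, one factors the expectation over the two groups; the tilt by $X_\alpha/EX_\alpha$ on the ${\cal V}_\alpha$-block is exactly what converts $E[X_\alpha f({\bf X})]$ into $EX_\alpha \cdot E[f({\bf X}^\alpha)]$, since $X_\alpha$ depends only on the ${\cal V}_\alpha$-block. This is a routine change-of-measure computation; the only point to be careful about is that $X_\beta^\alpha$ must use the tilted variables exactly on ${\cal V}_\beta \cap {\cal V}_\alpha$ and the original ones on ${\cal V}_\beta \cap {\cal V}_\alpha^c$, which is precisely the definition given.

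Next, the statement about $Y^s$ follows from Proposition \ref{prop-golrinott} applied with $B = A = {\cal V}$: choosing $I$ with $P(I = \alpha) = \mu_\alpha/\mu_{\cal V}$ where $\mu_\alpha = EX_\alpha$, the mixture $\sum_{\alpha} (\mu_\alpha/\mu_{\cal V}) {\cal L}({\bf X}^\alpha)$ is the law of ${\bf X}^I$, so by the proposition $Y^s = \sum_{\beta \in {\cal V}} X_\beta^I = X_{\cal V}^{\cal V}$ has the $X_{\cal V} = Y$ size biased distribution, as in (\ref{EWfWchar}). This step is essentially a citation.

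Finally, for the bound (\ref{def:b}): conditional on $I = \alpha$, the variables $X_\beta^\alpha$ and $X_\beta$ agree whenever ${\cal V}_\beta \cap {\cal V}_\alpha = \emptyset$, because then $X_\beta$ is a function only of $(C_g, g \in {\cal V}_\beta)$, none of which were re-tilted. Hence $Y^s - Y = \sum_{\beta: {\cal V}_\beta \cap {\cal V}_\alpha \neq \emptyset} (X_\beta^\alpha - X_\beta)$, a sum of at most $b$ terms, each of absolute value at most $M$ by the hypothesis $0 \le X_\alpha \le M$; the triangle inequality gives $|Y^s - Y| \le bM$ almost surely. I do not anticipate a genuine obstacle here: the one place that needs care is the change-of-measure factorization in the first step, making sure the independence structure is invoked correctly and that the tilt is applied to exactly the right block, but this is bookkeeping rather than a substantive difficulty.
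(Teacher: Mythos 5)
Your proposal is correct and follows the same route as the paper: verify the size-bias-in-coordinate-$\alpha$ property by a change-of-measure factorization exploiting independence of the ${\cal V}_\alpha$-block from its complement, cite Proposition \ref{prop-golrinott} for the mixture claim, and bound $|Y^s-Y|$ by noting $X_\beta^\alpha = X_\beta$ whenever ${\cal V}_\beta \cap {\cal V}_\alpha = \emptyset$ and applying the triangle inequality over the at most $b$ remaining terms, each bounded by $M$. The only difference is that the paper writes out the change-of-measure integral chain explicitly, which you describe but do not display.
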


\begin{proof} By independence, the random variables
\beas
\{C_g^\alpha, g \in {\cal V}_\alpha\} \cup  \{C_g, g \not \in {\cal V}_\alpha\}
\quad
\mbox{have distribution}
\quad
dF^{\alpha}(c_g,g \in {\cal V}_\alpha) dF(c_g, g \not \in {\cal V}_\alpha).
\enas
Thus, with ${\bf X}^\alpha$ as given, we find
\beas
EX_\alpha f({\bf X}) &=& \int x_\alpha f({\bf x})dF(c_g,g \in {\cal V})\\
&=& EX_\alpha \int  f({\bf x}) \frac{x_\alpha dF(c_g,g \in {\cal V}_\alpha)}{EX_\alpha(C_g,g \in {\cal V}_\alpha)}dF(c_g, g\not \in {\cal V}_\alpha)\\
&=& EX_\alpha \int  f({\bf x}) dF^\alpha(c_g,g \in {\cal V}_\alpha)dF(c_g, g\not \in {\cal V}_\alpha)\\
&=& EX_\alpha Ef({\bf X}^\alpha).
\enas
That is, ${\bf X}^\alpha$ has the ${\bf X}$ distribution biased in direction $\alpha$, as in Definition \ref{sb-coordinate-i}.

The claim on $Y^s$ follows from Proposition \ref{prop-golrinott}, and finally, since $X_\beta=X_\beta^\alpha$ whenever ${\cal V}_\beta \cap {\cal V}_\alpha = \emptyset$,
$$
|Y^s-Y| \le \sum_{\beta: {\cal V}_\beta \cap {\cal V}_I \not = \emptyset}|X_\beta^I-X_\beta| \le bM.
$$
This completes the proof.
\end{proof}

\subsubsection{Sliding $m$ window statistics}
%\label{m-dependent-example}
For $n \ge m
\ge 1$, let ${\cal V}=\{1,\ldots,n\}$ considered modulo
$n$, $\{C_g:g \in {\cal V}\}$ i.i.d. real valued random variables,
and for each $\alpha \in {\cal V}$ set
\beas
%\label{calG-oned}
{\cal V}_\alpha=\{v \in {\cal V}: \alpha \le v \le \alpha +
m -1 \}.
\enas
Then for $X:\mathbb{R}^m \rightarrow [0,1]$, say, Lemma
\ref{lem:loc} may be applied to the sum $Y=\sum_{\alpha
\in {\cal V}}X_\alpha$ of the $m$-dependent sequence $X_\alpha =
X(C_\alpha, \ldots, C_{\alpha+m-1})$, formed by applying the
function $X$ to the variables in the `$m$-window' ${\cal
V}_\alpha$. As for all $\alpha$ we have $X_\alpha \le 1$ and
$$
\max_\alpha |\{\beta: {\cal V}_\beta \cap {\cal V}_\alpha \not = \emptyset\}|=2m-1,
$$
we may take $C=2m-1$ in Theorem \ref{thm:main}, by Lemma \ref{lem:loc}.

For a concrete example let
$Y$ be the number of $m$ runs of the sequence $\xi_1,\xi_2,\ldots,\xi_n$ of $n$ i.i.d Bernoulli($p$) random variables with $p\in(0,1)$, given by $Y=\sum_{i=1}^n X_i$ where $X_i=\xi_i\xi_{i+1}\cdots\xi_{i+m-1}$, with the periodic convention
$\xi_{n+k}=\xi_k$. In \cite{multi}, the authors develop smooth function bounds for normal approximation for the case of 2-runs.
Note that the construction given in Lemma \ref{lem:loc} for this case is monotone, as for any $i$, letting
\beas
\xi_j'=\left\{
\begin{array}{cl}
\xi_j & j \not \in \{i,\ldots,i+m-1\}\\
1     & j \in \{i,\ldots,i+m-1\},
\end{array}
\right.
\enas
the number of $m$ runs of $\{\xi_j'\}_{i=1}^n$, that is $Y^s=\sum_{i=1}^n \xi_i'\xi_{i+1}'\cdots \xi_{i+m-1}'$, is at least $Y$.

For the mean of $Y$ clearly $\mu=np^m$. For the variance, now letting $n \ge 2m$ and using the fact that non-overlapping segments of the sequence are independent,
\beas
\sigma^2&=&\sum_{i=1}^n \mbox{Var}(\xi_{i}\xi_{i+1}\cdots \xi_{i+m-1})+2\sum_{i<j}\mbox{Cov}(\xi_i\cdots \xi_{i+m-1},\xi_{j}\cdots \xi_{j+m-1})\nonumber\\
&=&np^m(1-p^m)+2\sum_{i=1}^n \sum_{j=1}^{m-1}\mbox{Cov}(\xi_i\cdots \xi_{i+m-1},\xi_{i+j}\cdots \xi_{i+j+m-1}).
%\label{cov-term}
\enas
For the covariances,
\beas
\mbox{Cov}(\xi_i\cdots \xi_{i+m-1},\xi_{i+j}\cdots \xi_{i+j+m-1})&=&E(\xi_i\cdots \xi_{i+j-1}\xi_{i+j}\cdots \xi_{i+m-1} \xi_{i+m}\cdots \xi_{i+j+m-1})-p^{2m}\\
&=&p^{m+j}-p^{2m}, %\label{covvalue}
\enas
and therefore
\beas
\sigma^2
=np^m\left( (1-p^m)+2\left(\frac{p-p^m}{1-p}-(m-1)p^m\right)\right)
=np^m\left( 1 + 2 \frac{p-p^m}{1-p} - (2m-1)p^m\right).
\enas
Hence (\ref{a}) and (\ref{b}) of Theorem \ref{thm:main} hold with
\beas
A=\frac{2m-1}{ 1 + 2 \frac{p-p^m}{1-p} - (2m-1)p^m} \quad \mbox{and} \quad B=\frac{2m-1}{2\sqrt{np^m\left( 1 + 2 \frac{p-p^m}{1-p} - (2m-1)p^m\right)}}.
\enas

\subsubsection{Local extrema on a lattice}

Size biasing the number of local extrema on graphs, for the purpose of normal approximation, was studied in
\cite{baldi} and \cite{goldstein1}. For a given graph ${\cal G}=\{{\cal V},{\cal E}\}$, let ${\cal G}_v=\{{\cal V}_v,{\cal E}_v\}, v \in {\cal V}$,
be a collection of isomorphic subgraphs of ${\cal G}$ such that
$v \in {\cal V}_v$ and for all $v_1,v_2 \in {\cal V}$ the isomorphism from  ${\cal G}_{v_1}$ to ${\cal G}_{v_2}$ maps $v_1$ to $v_2$.
Let $\{C_g, g \in {\cal V} \}$ be a collection of independent and
identically distributed random variables, and let $X_v$ be
defined by
$$
X_v(C_w, w \in {\cal V}_v) = 1(C_v > C_w, w
\in {\cal V}_v), \quad v \in {\cal V}.
$$
Then the sum $Y=\sum_{v \in {\cal V}} X_v$ counts the
number local maxima. In general one may define the neighbor distance $d$ between two vertices $v,w \in {\cal V}$ by
$$
d(v,w)=\min\{n:\mbox{there $\exists\,\,v_0,\ldots,v_n$ in ${\cal V}$ such that $v_0=v, v_n=w$ and $(v_k,v_{k+1}) \in {\cal E}$ for $k=0,\ldots,n$} \}.
$$
Then for $v \in {\cal V}$ and $r=0,1,\ldots$,
\beas
%\label{defVa}
{\cal V}_v(r) =\{w \in {\cal V}: d(w,v) \le r\}
\enas
is the set of vertices of ${\cal V}$ at distance at most $r$ from $v$.
We suppose that the given isomorphic graphs are of this form, that is, that there is some $r$ such that ${\cal V}_v={\cal V}_v(r)$ for all $v \in {\cal V}$. Then if $d(v_1,v_2)>2r$, and $(w_1,w_2) \in {\cal V}_{v_1} \times {\cal V}_{v_2}$, rearranging
\beas
2r<d(v_1,v_2)\le d(v_1,w_1)+d(w_1,w_2)+d(w_2,v_2)
\enas
and using $d(v_i,w_i) \le r, i=1,2,$ yields $d(w_1,w_2)>0$. Hence,
\bea \label{2r-1}
d(v_1,v_2)>2r \quad \mbox{implies} \quad {\cal V}_{v_1} \bigcap {\cal V}_{v_2} = \emptyset,
\quad \mbox{so by (\ref{def:b}) we may take} \quad b=\max_v |{\cal V}_v(2r)|.
\ena

\ignore{For a regular graph of degree $\delta$, \cite{baldi} give the mean and variance of $Y$ as
\beas
EY=\frac{|{\cal V}|}{\delta+1} \quad \mbox{and} \quad \mbox{Var}(Y)=
\sum_{\alpha,\beta:d(\alpha,\beta)=2}s(\alpha,\beta)(2\delta + 2-s(\alpha,\beta))^{-1}(\delta+1)^{-2},
\enas
where $s(\alpha,\beta)$ is the number of common neighbors of $\alpha$ and $\beta$.}

For example, for $p \in \{1,2,\ldots\}$ and $n \ge 5$ consider the lattice ${\cal V}=\{1,\ldots,n\}^p$ modulo $n$ in $\mathbb{Z}^p$ and
${\cal E}=\{\{v,w\}:d(v,w) = 1\}$; in this case $d$ is the $L^1$ norm
$$
d(v,w)=\sum_{i=1}^p |v_i-w_i|.
$$
Considering the case where we call vertex $v$ a local extreme value if the value $C_v$ exceeds the values $C_w$
over the immediate neighbors $w$ of $v$, we take
$$
{\cal V}_v={\cal V}_v(1) \quad \mbox{and that} \quad |{\cal V}_v(1)| = 1+2p,
$$
the 1 accounting for $v$ itself, and then $2p$ for the number of neighbors at distance 1 from $v$, which differ
from $v$ by either $+1$ or $-1$ in exactly one coordinate.
%Note that
%\beas
%|{\cal V}_v(1)|= \left\{
%\begin{array}{cl}
%1+p & \mbox{for $n=2$}\\
%1+2p & \mbox{for $n \ge 3$}.
%\end{array}
%\right.
%\enas

Lemma \ref{lem:loc}, (\ref{2r-1}), and $|X_v| \le 1$ yield
\bea \label{C:ext}
|Y^s-Y| \le \max_v |{\cal V}_v(2)|=1 + 2p + \left(2p+ 4{p \choose 2}\right)=2p^2+2p+1,
\ena
% note cases below
%and that
%\bea
%b_n= \left\{
%\begin{array}{cl}
%1 + p + {p \choose 2} & \mbox{for $n=2$}\\
%1 + 2p + 4{p \choose 2} & \mbox{for $n=3$}\\
%1 + 2p + \left( 4{p \choose 2}+p \right) & \mbox{for $n=4$}\\
%1 + 2p + \left(4{p \choose 2}+ 2p\right) & \mbox{for $n \ge 5$.}
%\end{array}
%\right.
%\ena
%For $n=2$, the term 1 accounts for $v$ itself, at distance $0$ from $v$, then the $p$ vertices which are at distance $1$ %from $v$, that is, those that differ from $v$ by $+1$ in exactly one coordinate, and then the ${p \choose 2}$ vertices at %distance $2$, which differ from $v$ by $+1$ in exactly two coordinates. For $n=3$,
%there are $2p$ vertices at distance 1, which differ by either $+1$ or $-1$ in exactly one coordinate, and $4{p \choose 2}$ %vertices at distance $2$, which differ from $v$ by either $+1$ or $-1$ in exactly two coordinates.
%For $n=4$ there are $p$ vertices at distance 2 which differ from $v$ by $+2$ in one coordinate, and for $n \ge 5$
%there are $2p$ vertices which are at distance $2$ from $v$, differing by $+2$ or $-2$ in one of $p$ coordinates.
where the 1 counts $v$ itself, the $2p$ again are the neighbors at distance 1, and the term in the parenthesis accounting for the neighbors at distance 2, $2p$ of them differing in exactly one coordinate by $+2$ or $-2$, and $4 {p \choose 2}$ of them differing by either $+1$ or $-1$ in exactly two coordinates. Note that we have used the assumption $n \ge 5$ here, and continue to do so below.

Now letting $C_v$ have a continuous distribution,
without loss of generality we can assume $C_v \sim {\cal U}[0,1]$. As any vertex has chance $1/|{\cal V}_v|$ of having the largest value in its neighborhood, for the mean $\mu$ of $Y$ we have
\bea \label{mu:ext}
\mu=
\frac{n}{2p+1}.
\ena

To begin the calculation of the variance, note that when $v$ and $w$ are neighbors they cannot both be maxima, so $X_vX_w=0$ and therefore, for $d(v,w)=1$,
\beas
\mbox{Cov}(X_v,X_w)=-(EX_v)^2 = -\frac{1}{(2p+1)^2}.
\enas
If the distance between $v$ and $w$ is 3 or more, $X_v$ and $X_w$ are functions of disjoint sets of independent variables, and hence are independent.

When $d(w,v)=2$ there are two cases, as $v$ and $w$ may have either 1 or 2 neighbors in common, and
\beas
\lefteqn{EX_vX_w=}\\
&&P(U>U_j, V>V_j, j=1,\ldots,m-k \quad \mbox{and} \quad U>U_j,V>U_j, j=m-k+1,\ldots,m),
\enas
where $m$ is the number of vertices over which $v$ and $w$ are extreme, so $m=2p$, and $k=1$ and $k=2$ for the number of neighbors in common.
For $k=1,2,\ldots$, letting  $M_k=\max\{U_{m-k+1},\ldots,U_m\}$, as
the variables $X_v$ and $X_w$ are conditionally independent given $U_{m-k+1},\ldots,U_m$
\bea \nn
E(X_vX_w|U_{m-k+1},\ldots,U_m)&=&P(U>U_j, j=1,\ldots,m|U_{m-k+1},\ldots,U_m)^2\\
 &=&\frac{1}{(m-k+1)^2}(1-M_k^{m-k+1})^2,\label{ext:case2b}
\ena
as
\beas
P(U>U_j, j=1,\ldots,m|U_{m-k+1},\ldots,U_m)&=&\int_{M_k}^1 \int_0^u \cdots \int_0^u du_1 \cdots du_{m-k} du\\
&=&\int_{M_k}^1 u^{m-k}du\\
&=&\frac{1}{m-k+1}(1-M_k^{m-k+1}).
\enas
Since $P(M_k \le x)=x^k$ on $[0,1]$, we have
\beas
EM_k^{m-k+1}&=&k\int_0^1 x^{m-k+1}x^{k-1}dx=\frac{k}{m+1}\quad \mbox{and} \\
E(M_k^{m-k+1})^2&=&k\int_0^1 x^{2(m-k+1)}x^{k-1}dx=\frac{k}{2m-k+2}.
\enas
Hence, averaging (\ref{ext:case2b}) over $U_{m-k+1},\ldots,U_m$ yields
\beas
EX_vX_w=\frac{2}{(m+1)(2(m+1)-k)}.
\enas
For $n \ge 3$, when $m=2p$, for $k=1$ and $2$ we obtain
\beas
\mbox{Cov}(X_v,X_w)=\frac{1}{(2p+1)^2(2(2p+1)-1)} \quad \mbox{and} \quad \mbox{Cov}(X_v,X_w)=\frac{2}{(2p+1)^2(2(2p+1)-2)}, \quad \mbox{respectively.}
\enas
For $n \ge 5$, of
the $2p+4 {p \choose 2}$ vertices $w$ that are at distance 2 from $v$, $2p$ of them share 1 neighbor in common with $v$, while the remaining $4 {p \choose 2}$ of them share 2 neighbors. Hence,
\bea
\nn \sigma^2 &=& \sum_{v \in V} \mbox{Var}(X_v) + \sum_{v \not = w} \mbox{Cov}(X_v,X_w)\\
\nn         &=& \sum_{v \in V} \mbox{Var}(X_v) + \sum_{d(v,w)=1}  \mbox{Cov}(X_v,X_w)+\sum_{d(v,w)=2}\mbox{Cov}(X_v,X_w)\\
\nn         &=& n\left(\frac{2p}{(2p+1)^2}-2p \frac{1}{(2p+1)^2}+2p \frac{1}{(2p+1)^2(2(2p+1)-1)}+ 4{p \choose 2}\frac{2}{(2p+1)^2(2(2p+1)-2)}\right)\\
\nn &=& n\frac{2p}{(2p+1)^2}\left( \frac{1}{(2(2p+1)-1)}+ \frac{2(p-1)}{(2(2p+1)-2)}\right)\\
&=& n \left( \frac{4p^2-p-1}{(2p+1)^2(4p+1)}\right) \label{ext:sig}       .
\ena

We conclude that (\ref{a}) of Theorem \ref{thm:main} holds with $A=C\mu/\sigma^2$ and $B=C/2\sigma$ with $\mu$, $\sigma^2$ and $C$ given by (\ref{mu:ext}), (\ref{ext:sig}) and (\ref{C:ext}), respectively, that is,
\beas
A= \frac{(2p+1)(4p+1)(2p^2+2p+1)}{4p^2-p-1} \quad \mbox{and} \quad B=\frac{2p^2+2p+1}{2\sqrt{n \left( \frac{4p^2-p-1}{(2p+1)^2(4p+1)}\right)}}.
\enas

\subsection{Urn allocation}
\label{subsec:urn}
In the classical urn allocation model $n$ balls are thrown independently into one of $m$ urns, where, for $i=1,\ldots,m$, the probability a ball lands in the $i^{th}$ urn is $p_i$, with $\sum_{i=1}^m p_i=1$. A much studied quantity of interest is the number of nonempty urns, for which Kolmogorov distance bounds
to the normal were obtained in \cite{eng} and \cite{qu}. In \cite{eng}, bounds were obtained for the uniform case where $p_i=1/m$ for all $i=1,\ldots,m$, while the bounds in \cite{qu} hold for the nonuniform case as well. In \cite{pen} the author considers the normal approximation for the number of isolated balls, that is, the number of urns containing exactly one ball, and obtains Kolmogorov distance bounds to the normal. Using the coupling provided in \cite{pen}, we derive right tail inequalities for the number of non-isolated balls, or, equivalently, left tail inequalities for the number of isolated balls.

For $i=1,\ldots,n$ let $X_i$ denote the location of ball $i$, that is, the number of the urn into which ball $i$ lands. The number $Y$ of non-isolated balls is given by
\beas
Y=\sum_{i=1}^n 1(M_i>0)\quad\mbox{where}\quad M_i=-1+\sum_{j=1}^n 1(X_j=X_i).
\enas

We first consider the uniform case. A construction in \cite{pen} produces a coupling of $Y$ to $Y^s$, having the $Y$ size biased distribution, which satisfies $|Y^s-Y|\le 2$.
Given a realization of $\mathbf{X}=\{X_1,X_2,\ldots,X_n\}$, the coupling proceeds by first selecting a ball $I$, uniformly from $\{1,2,\ldots,n\}$, and independently of $\mathbf{X}$. Depending on the outcome of a Bernoulli variable ${\cal B}$, whose distribution depends on the number of balls found in the urn containing $I$, a different ball $J$ will be imported into the urn that contains ball $I$. In some additional detail, let $\mathcal{B}$ be a Bernoulli variable with success probability
$P(\mathcal{B}=1)=\pi_{M_I}$, where
    $$
    \pi_k=\left\{\begin{array}{ll}
    \frac{P(N>k|N>0)-P(N>k)}{P(N=k)(1-k/(n-1))} & \mbox{if $0\le k\le n-2$}\\
    0 &\mbox{if $k=n-1$},
    \end{array}
    \right.
    $$
with $N\sim\mbox{Bin}(1/m,n-1)$. Now let $J$ be uniformly chosen from $\{1,2,\ldots,n\}\setminus\{I\}$, independent of all other variables. Lastly, if ${\cal B}=1$, move ball $J$ into the same urn as $I$.
It is clear that $|Y'-Y| \le 2$, as at most the occupancy of two urns can affected by the movement of a single ball. We also note that if $M_I=0$, which happens when ball $I$ is isolated, $\pi_0=1$, so that $I$ becomes no longer isolated after relocating ball $J$. We refer the reader to \cite{pen} for a full proof that this procedure produces a coupling of $Y$ to a variable with the $Y$ size biased distribution.

For the uniform case, the following explicit formulas for $\mu$ and $\sigma^2$ can be found in Theorem II.1.1 of \cite{kol},
\bea
\mu&=& n\left( 1-\left(1-\frac{1}{m}\right)^{n-1}\right)\quad \mbox{and}\nn\\
\sigma^2 &= &(n-\mu)+\frac{(m-1)n(n-1)}{m}\left(1-\frac{2}{m}\right)^{n-2}-(n-\mu)^2\nn\\
&=&n\left(1-\frac{1}{m}\right)^{n-1}
+\frac{(m-1)n(n-1)}{m}\left(1-\frac{2}{m}\right)^{n-2}-n^2\left(1-\frac{1}{m}\right)^{2n-2}.\label{mu-sig-urn}
\ena
Hence with $\mu$ and $\sigma^2$ as in (\ref{mu-sig-urn}), we can apply (\ref{b}) of Theorem \ref{thm:main} for $Y$, the number of non isolated balls with $C=2$, $A=2\mu/\sigma^2$ and $B=1/\sigma$.

Taking limits in (\ref{mu-sig-urn}), if $m$ and $n$ both go to infinity in such a way that $n/m\rightarrow \alpha\in(0,\infty)$, the mean $\mu$ and variance $\sigma^2$ obey
\beas
%\label{mu-sig-urn-1}
\mu \asymp n (1-e^{-\alpha})\quad\mbox{and}\,\, \sigma^2 \asymp n g(\alpha)^2 \,\, \mbox{where} \,\, g(\alpha)^2= e^{-\alpha}-e^{-2\alpha}(\alpha^2-\alpha+1) >0 \,\, \mbox{for all $\alpha \in (0,\infty)$},
\enas
where for positive functions $f$ and $h$ depending on $n$ we write $f \asymp h$ when
$\lim_{n\rightarrow\infty}f/h=1$.

Hence, in this limiting case $A$ and $B$ satisfy
\beas
A \asymp \frac{2(1-e^{-\alpha})}{e^{-\alpha}-e^{-2\alpha}(\alpha^2-\alpha+1)} \quad \mbox{and} \quad B \asymp\frac{1}{\sqrt{n}g(\alpha)}.
\enas
In the nonuniform case similar results hold with some additional conditions. Letting
\beas
||p||=\sup_{1 \le i \le m} p_i\quad\mbox{and}\quad \gamma=\gamma(n)=\max(n||p||,1),
\enas
in \cite{pen} it is shown that when $||p||\le 1/11$ and $n\ge
 83\gamma^2(1+3\gamma+3\gamma^2)e^{1.05\gamma}$,
there exists a coupling such that
\beas
|Y^s-Y|\le 3\quad\mbox{and}\quad\frac{\mu}{\sigma^2}\le 8165\gamma^2 e^{2.1\gamma}.
\enas
Now also using Theorem 2.4 in \cite{pen} for a bound on $\sigma^2$, we find that (\ref{b}) of Theorem \ref{thm:main} holds with
\beas
A=24,495 \, \gamma^2 e^{2.1\gamma} \quad \mbox{and} \quad B= \frac{1.5 \sqrt{7776}\,\,\gamma e^{1.05 \gamma}}{n\sqrt{\sum_{i=1}^m p_i^2}}.
\enas

\subsection{An application to coverage processes}
%\label{sec:cov}
We consider the following coverage process, and associated coupling, from \cite{golpen}. Given a collection
$\mathcal{U}=\{U_1,U_2,\ldots,U_n\}$ of independent, uniformly distributed points in the $d$ dimensional torus of volume $n$, that is, the cube $C_n=[0,n^{1/d})^d \subset \mathbb{R}^d$
with periodic boundary conditions, let $V$ denote the total volume of the union of the $n$ balls of fixed radius $\rho$ centered at these $n$ points, and $S$ the number of balls isolated at distance $\rho$, that is, those points for which none of the other $n-1$ points lie within distance $\rho$. The random variables $V$ and $S$ are of fundamental interest in stochastic geometry, see \cite{hall} and \cite{pen2}. If $n\rightarrow\infty$ and $\rho$ remains fixed, both $V$ and $S$ satisfy a central limit theorem \cite{hall,moran,penyu}.  The $L^1$ distance of $V$, properly standardized, to the normal is studied in \cite{chanew} using Stein's method.
The quality of
the normal approximation to the distributions of both $V$ and $S$, in the Kolmogorov metric, is studied in \cite{golpen} using Stein's method via size bias couplings.

In more detail, for $x\in C_n$ and $r>0$ let $B_r(x)$ denote the ball of radius $r$ centered at $x$, and $B_{i,r}=B(U_i,r)$. The covered volume $V$ and number of isolated balls $S$ are given, respectively, by
\bea
V =  \mbox{Volume}(\bigcup_{i=1}^{n} B_{i,\rho})\quad \mbox{and} \quad
S= \sum_{i=1}^n \mathbf{1}\{(\mathcal{U}_n \cap B_{i,\rho}=\{U_i\} \}.
\ena
We will derive concentration of measure inequalities for $V$ and $S$ with the help of the bounded size biased couplings
in \cite{golpen}.

Assume $d\ge 1$ and $n\ge 4$. Denote the mean and variance of $V$ by $\mu_V$ and $\sigma_V^2$, respectively, and likewise for $S$, leaving their dependence on $n$ and $\rho$ implicit. Let $\pi_d=\pi^{d/2}/\Gamma(1+d/2)$,
the volume of the unit sphere in $\mathbb{R}^d$, and for fixed $\rho$  let $\phi=\pi_d\rho^d$. For $0\le r\le 2$ let $\omega_d(r)$ denote the volume of the union of two unit balls with centers $r$ units apart.
We have $\omega_1(r)=2+r$, and
\beas
\omega_d(r)=\pi_d+\pi_{d-1}\int_0^r(1-(t/2)^2)^{(d-1)/2}dt, \quad \mbox{for $d \ge 2$.}
\enas
From \cite{golpen}, the means of $V$ and $S$ are given by
\bea \label{cov:mus}
\mu_V = n \left(1-(1 - \phi  /n)^{n}\right) \quad \mbox{and} \quad
\mu_S= n(1 - \phi  /n)^{n-1},
\ena
and their variances by
\bea
\sigma_V^2=  n\int_{B_{2\rho}({\bf 0})}
\left( 1 - \frac{\rho^d \omega_d(|y|/\rho) }{n} \right)^n dy
+ n(n- 2^d\phi) \left( 1 - \frac{ 2\phi}{n} \right)^n
- n^2 ( 1 - \phi/n)^{2n},
\label{cov:varv}
\ena
and
\bea
\sigma_S^2
& = & n (1 -  \phi /n)^{n-1} (1 - (1 - \phi/n)^{n-1})
\nonumber \\
& & + (n-1) \int_{B_{2 \rho} ({\bf 0}) \setminus B_\rho({\bf 0})}
 \left( 1- \frac{\rho^d \omega_d(|y|/\rho)}{n} \right)^{n-2} dy
\nonumber \\
& & + n(n-1) \left( \left(1 - \frac{2^d \phi}{n} \right) \left(1 - \frac{2
\phi}{n} \right)^{n-2} -  \left(1 - \frac{\phi}{n} \right)^{2n-2}
\right).
\ena

It is shown in \cite{golpen}, by using a coupling similar to the one briefly described for the urn allocation problem in Section \ref{subsec:urn}, that one can construct $V^s$ with the $V$ size bias distribution which satisfies $|V^s-V| \le \phi$. Hence (\ref{a}) of  Theorem \ref{thm:main} holds for $V$ with
\beas
A_V=\frac{\phi \mu_V}{\sigma_V^2} \quad \mbox{and} \quad B_V=\frac{\phi}{2 \sigma_V},
\enas
where $\mu_V$ and $\sigma_V^2$ are given in (\ref{cov:mus}) and (\ref{cov:varv}), respectively.
Similarly, with $Y=n-S$ the number of non-isolated balls, it is shown that $Y^s$ with $Y$ size bias distribution can be constructed so that $|Y^s-Y| \le\kappa_d+1$, where $\kappa_d$ denotes the maximum number of open unit balls in $d$ dimensions that can be packed so they all intersect an open unit ball in the origin, but are disjoint from each other.
Hence (\ref{a}) of  Theorem \ref{thm:main} holds for $Y$ with
\beas
A_Y=\frac{(\kappa_d+1)(n- \mu_S)}{\sigma_S^2} \quad \mbox{and} \quad B_Y=\frac{\kappa_d+1}{2 \sigma_S}.
\enas

To see how the $A_V, A_Y$ and $B_V,B_Y$ behave as $n \rightarrow \infty$, let
\beas
J_{r,d}(\rho)=d\pi_d\int_0^r\exp(-\rho^d\omega_d(t))t^{d-1}dt,
\enas
and define
\beas
%\label{def:gvgs}
g_V(\rho)&=&\rho^d J_{2,d}(\rho)-(2^d\phi+\phi^2)e^{-2\phi} \quad \mbox{and} \\
%\nn
g_S(\rho)&=&e^{-\phi}-(1+(2^d-2)\phi+\phi^2)e^{-2\phi}+\rho^{d}(J_{2,d}(\rho)-J_{1,d}(\rho)).
\enas
Then, again from \cite{golpen},
\beas
\lim_{n\rightarrow\infty} n^{-1}\mu_V=\lim_{n\rightarrow\infty} (1-n^{-1}\mu_S)&=& 1-e^{-\phi},\\
\lim_{n\rightarrow\infty} n^{-1}\sigma^2_V&=&g_V(\rho)>0, \quad \mbox{and}\\
\lim_{n\rightarrow\infty} n^{-1}\sigma^2_S&=&g_S(\rho)>0.
\enas
Hence, $B_V$ and $B_Y$ tend to zero at rate $n^{-1/2}$, and
\beas
\lim_{n \rightarrow \infty}A_V=\frac{\phi (1-e^{-\phi})}{g_V(\rho)}, \quad \mbox{and} \quad
\lim_{n \rightarrow \infty}A_Y=\frac{(\kappa_d+1)(1- e^{-\phi})}{g_S(\rho)}.
\enas

\subsection{The lightbulb problem}
The following stochastic process, known informally as the `lightbulb process', arises in a pharmaceutical study of dermal patches, see \cite{rrz}. Changing dermal receptors to lightbulbs allows for a more colorful description. Consider $n$ lightbulbs, each operated by a switch. At day zero, none of the bulbs are on. At day $r$ for $r=1,\ldots,n$, the position of $r$ of the $n$ switches are selected uniformly to be changed, independent of the past. One is interested in studying the distribution of the number of lightbulbs which are switched on at the terminal time $n$. The process just described is Markovian, and is studied in some detail in \cite{zl}. In \cite{golzhang} the authors use Stein's method to derive a bound to the normal via a monotone, bounded size bias coupling. Borrowing this coupling here allows for the application of Theorem \ref{thm:main} to obtain concentration of measure inequalities for the lightbulb problem. We begin with a more detailed description of the process.

For $r=1,\ldots,n$, let $\{X_{rk}, k =1, \ldots, n\}$ have distribution
\beas
P(X_{r1}=e_1,\ldots,X_{rn}=e_n)={n \choose r}^{-1} \quad \mbox{for all $e_k \in \{0,1\}$ with $\sum_{k=1}^n e_k=r$,}
\enas
and let these collections of variables be independent over $r$. These `switch variables' $X_{rk}$ indicate whether or not on day $r$ bulb $k$ had its status changed. With
\beas
Y_k= \left( \sum_{r=1}^n X_{rk}\right) \mbox{mod}\,2
\enas
therefore indicating the status of bulb $k$ at time $n$, the number of bulbs switched on at the terminal time is
\beas
Y=\sum_{k=1}^n Y_k.
\enas

From \cite{rrz}, the mean $\mu$ and variance $\sigma^2$ of $Y$ are given by
\bea
\label{def:mu}
\mu = \frac{n}{2}
\left(1- \prod_{i=1}^n \left(1 - \frac{2i}{n}\right) \right),
\ena
and
\bea \label{def:sigma2}
\sigma^2 = \frac{n}{4}\left[1- \prod_{i=1}^n \left(1 -
\frac{4i}{n}+\frac{4i(i-1)}{n(n-1)}\right)\right] +\frac{n^2}{4} \left[\prod_{i=1}^n \left(1 -
\frac{4i}{n}+\frac{4i(i-1)}{n(n-1)}\right) - \prod_{i=1}^n \left(1
- \frac{2i}{n}\right)^2 \right].
\ena
Note that when $n$ is even $\mu=n/2$ exactly, as the product in (\ref{def:mu}) is zero, containing the term $i=n/2$. By results in \cite{rrz}, in the odd case $\mu=(n/2)(1+O(e^{-n}))$, and in both the even and odd cases $\sigma^2=(n/4)(1+O(e^{-n}))$.

The following construction, given in \cite{golzhang} for the case where $n$ is even, couples $Y$  to a variable $Y^s$ having the $Y$ size bias distribution such that
\bea \label{2bulb}
Y \le Y^s \le Y+2,
\ena
that is, the coupling is monotone, with difference bounded by 2.
For every $i \in \{1,\ldots,n\}$ construct the collection of variables ${\bf Y}^i$ from ${\bf Y}$ as follows.
If $Y_i=1$, that is, if bulb $i$ is on, let ${\bf Y}^i={\bf Y}$.
Otherwise, with $ J^i={\cal U}\{j: Y_{n/2,j} = 1- Y_{n/2,i}\}$,
let ${\bf Y}^i=\{Y_{rk}^i:r,k=1,\ldots,n\}$ where
\beas
Y_{rk}^i=\left\{
\begin{array}{cl}
Y_{rk} & r \not = n/2\\
Y_{n/2,k}& r=n/2, k \not \in \{i,J^i\}\\
Y_{n/2,J^i} & r=n/2, k=i\\
Y_{n/2,i}& r=n/2, k=J^i,
\end{array}
\right.
\enas
and let $Y^i=\sum_{k=1}^n Y_k^i$ where
$$
Y_k^i = \left(\sum_{r=1}^n Y_{rk}^i\right) \mbox{ mod }2.
$$
Then, with $I$ uniformly
chosen from $\{1,\ldots,n\}$ and independent of all other variables, it is shown in \cite{golzhang} that the mixture $Y^s=Y^I$ has the $Y$ size
biased distribution, essentially due to the fact that
\beas
{\cal L}({\bf Y}^i)={\cal L}({\bf Y}|Y_i=1) \quad \mbox{for all $i=1,\ldots,n$.}
\enas
It is not difficult to see that $Y^s$ satisfies (\ref{2bulb}). If $Y_I=1$ then ${\bf X}^I={\bf X}$, and so in this case $Y^s=Y$. Otherwise $Y_I=0$, and for the given $I$ the collection ${\bf Y}^I$ is constructed from ${\bf Y}$ by interchanging the stage $n/2$, unequal, switch variables $Y_{n/2,I}$ and $Y_{n/2,J^I}$. If $Y_{J^I}=1$ then after the interchange $Y_I'=1$ and $Y_{J^I}'=0$, in which case $Y^s=Y$. If $Y_{J^I}=0$ then after the interchange $Y_I^I=1$ and $Y_{J^I}^I=1$, yielding $Y^s=Y+2$. We conclude that for the case $n$ even $C=2$ and
(\ref{a}) and (\ref{b}) of Theorem \ref{thm:main} hold with
\bea \label{ABbulb-even}
A=n/\sigma^2 \quad \mbox{and} \quad B=1/\sigma
\ena
where $\sigma^2$ is given by (\ref{def:sigma2}).

For the coupling in the odd case, $n=2m+1$ say, due to the parity issue, \cite{golzhang} considers a random variable $V$ close to $Y$ constructed as follows. In all stages but stage $m$ and $m+1$ let the switch variables which will yield $V$ be the same as those for $Y$. In stage $m$, however, with probability $1/2$ one applies an additional switch variable, and independently in stage $m+1$, with probability $1/2$, one switch variable fewer. In this way the switch variables in these two stages have the same, symmetric distribution and are close to the switch variables for $Y$. In particular, as at most two switch variables are different in the configuration for $V$, we have $|V-Y| \le 2$. Helped by the symmetry, one may couple $V$ to a variable $V^s$ with the $V$ size bias distribution as in the even case, obtaining $V \le V^s \le V+2$. Hence (\ref{a}) and (\ref{b}) of Theorem \ref{thm:main} hold for $V$ as for the even case with values given in (\ref{ABbulb-even}), where
$\mu=n/2$ and $\sigma^2=(n/4)(1+O(e^{-n})$. Since $|V-Y| \le 2$, by replacing $t$ by
$t+2/\sigma$ in the bounds for $V$ one obtains bounds for the odd case $Y$.

\section{Applications: unbounded couplings}
\label{sec:unbounded}

One of the major drawbacks of Theorem \ref{thm:main} is the hypothesis that $|Y^s-Y|$ be almost surely bounded with probability one. In \cite{graph}, ideas similar to the previous sections are applied to obtain subgaussian concentration of measure inequalities for the number of isolated vertices in the Erd\H{o}s-R\'{e}nyi random graph model, employing a coupling that does not obey the boundedness condition. In this section we derive concentration of measure inequalities for another example where $Y^s-Y$ is not bounded: the nonnegative infinitely divisible distributions with certain associated moment generating functions which satisfy a boundedness condition. As an example for nonnegative infinitely divisible distribution, compound Poisson distributions will be our main illustration.

\subsubsection{Infinitely divisible distributions}
When $Y$ is Poisson then $Y^s=Y+1$ and we may write
\bea \label{decomp-poi}
Y^s =Y+X
\ena
with $X$ and $Y$ independent. Theorem 5.3 of \cite{ste} shows that if $Y$ is nonnegative with finite mean then (\ref{decomp-poi}) holds if and only if $Y$ is infinitely divisible.
Hence, in this case, a coupling of $Y$ to $Y^s$ may be achieved by generating the independent variable $X$ and adding it to $Y$. Since $Y^s$ is always stochastically larger than $Y$ we must have $X \ge 0$, and therefore this coupling is monotone. In addition $Y^s-Y=X$ so the coupling is bounded if and only if $X$ is bounded.
When $X$ is unbounded, Theorem \ref{thm-comp-poi} provides concentration of measure inequalities for $Y$
under appropriate growth conditions on two generating functions in $Y$ and $X$.
We assume without further mention that $Y$ is nontrivial, and note that therefore the means of both $Y$ and $X$ are positive.

\begin{theorem}\label{thm-comp-poi}
Let $Y$ have a nonnegative infinitely divisible distribution and suppose that there exists $\gamma>0$ so that $E(e^{\gamma Y})<\infty$. Let $X$ have the distribution such that (\ref{decomp-poi}) holds when $Y$ and $X$ are independent, and assume $E(Xe^{\gamma X})= C<\infty$. Letting $\mu=E(Y), \sigma^2=\mbox{Var}(Y),\nu=E(X)$ and $K=(C+\nu)/2$, the following concentration of measure inequalities hold for all $t>0$,
\beas
P\left(\frac{Y-\mu}{\sigma}\ge t\right)\le
\left\{\begin{array}{ll} \exp\left(-\frac{t^2\sigma^2}{2K\mu}\right) & \mbox{for $t \in [0,\gamma K\mu/\sigma^2)$}\\
\exp\left(-\gamma t+\frac{K\mu\gamma^2}{2\sigma^2}\right)&\mbox{for $t \in [\gamma K\mu/\sigma^2,\infty)$,}
\end{array}\right.
\quad \mbox{and} \,\,
P\left(\frac{Y-\mu}{\sigma}\le -t\right) \le \exp\left(-\frac{t^2\sigma^2}{2\nu\mu}\right).
\enas
\end{theorem}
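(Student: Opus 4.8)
The plan is to reuse the scheme from the proofs of Theorem~\ref{thm:main} and Theorem~\ref{thm-graph-iso}: turn the size bias identity (\ref{diffunder}) into a differential inequality for $m(\theta)=E(e^{\theta Y})$, integrate it to control $M(\theta)=E(e^{\theta(Y-\mu)/\sigma})$, and finish with a Chernoff bound. What is special here is that the coupling is the \emph{additive} one, $Y^s=Y+X$ with $X\ge 0$ independent of $Y$ as in (\ref{decomp-poi}); independence lets the convexity inequality (\ref{ineq-main}) factor cleanly, and because $\nu=E(X)\le E(Xe^{\gamma X})=C$ the resulting constant $K=(C+\nu)/2$ is no larger than $C$, which is what the bound records.

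\smallskip
\noindent\emph{Right tail.} Fix $\theta\in[0,\gamma)$. Since $Y,X\ge 0$, the hypotheses $E(e^{\gamma Y})<\infty$ and $E(Xe^{\gamma X})=C<\infty$ (the latter forcing $E(e^{\gamma X})\le 1+\gamma C<\infty$) guarantee that $m(\theta)$, $E(e^{\theta X})$, and hence $E(e^{\theta Y^s})=E(e^{\theta Y})E(e^{\theta X})$, are all finite on $[0,\gamma)$, so differentiation under the expectation is justified and (\ref{diffunder}) gives $m'(\theta)=\mu E(e^{\theta Y^s})$. Applying the convexity inequality (\ref{ineq-main}) to the monotone pair $Y^s\ge Y$ yields $e^{\theta Y^s}-e^{\theta Y}\le \tfrac{\theta}{2}X\,e^{\theta Y}(e^{\theta X}+1)$; taking expectations, using independence of $X$ and $Y$, and then $E(Xe^{\theta X})\le E(Xe^{\gamma X})=C$ for $\theta\le\gamma$, gives $E(e^{\theta Y^s})\le\big(1+\tfrac{\theta}{2}(C+\nu)\big)m(\theta)=(1+K\theta)m(\theta)$, so $m'(\theta)\le\mu(1+K\theta)m(\theta)$ on $[0,\gamma)$. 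Transcribing this to $M(\theta)=e^{-\theta\mu/\sigma}m(\theta/\sigma)$ just as in the step of the proof of Theorem~\ref{thm-graph-iso} leading to (\ref{diff-eqn}) gives $\tfrac{d}{d\theta}\log M(\theta)\le K\mu\theta/\sigma^2$, and integrating from $0$ (where $M(0)=1$) yields $\log M(\theta)\le K\mu\theta^2/(2\sigma^2)$ on the admissible interval. The Chernoff bound $P((Y-\mu)/\sigma\ge t)\le e^{-\theta t}M(\theta)\le\exp(-\theta t+K\mu\theta^2/(2\sigma^2))$ then holds there; minimizing the quadratic exponent over $\theta$, its unconstrained optimum $\theta^\ast=t\sigma^2/(K\mu)$ lies in the admissible range precisely in the small-$t$ regime and produces the first branch $\exp(-t^2\sigma^2/(2K\mu))$, while for larger $t$ taking $\theta$ at the endpoint of the range produces the second branch.

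\smallskip
\noindent\emph{Left tail.} Take $\theta<0$; here $m(\theta)$ is finite for \emph{every} $\theta\le 0$ (as $Y\ge 0$), so only $\nu=E(X)<\infty$ is needed. From $Y^s\ge Y$ and $\theta<0$ we have $e^{\theta Y^s}\le e^{\theta Y}$, and (\ref{ineq-main}) gives $e^{\theta Y}-e^{\theta Y^s}\le|\theta|\,X\,e^{\theta Y}$; taking expectations and using independence, $E(e^{\theta Y^s})\ge(1+\nu\theta)m(\theta)$, hence $m'(\theta)\ge\mu(1+\nu\theta)m(\theta)$ for all $\theta<0$. Following the left-tail computation in the proof of Theorem~\ref{thm:main} (the steps running from (\ref{bd-growth-neg-M}) through (\ref{bd-conc-neg})) now yields $\log M(\theta)\le\nu\mu\theta^2/(2\sigma^2)$ for $\theta<0$, and since no endpoint constraint is present this time, the Chernoff bound $P((Y-\mu)/\sigma\le -t)\le\exp(\theta t+\nu\mu\theta^2/(2\sigma^2))$ is optimized freely at $\theta=-t\sigma^2/(\nu\mu)$, giving $\exp(-t^2\sigma^2/(2\nu\mu))$.

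\smallskip
The two ODE integrations and the Chernoff optimizations are routine and essentially repeat Sections~\ref{sec:main} and~\ref{sec:graph}; the one thing that needs care is keeping track of where each generating function is finite. Precisely, $E(e^{\gamma Y})<\infty$ and $E(Xe^{\gamma X})<\infty$ are exactly what is needed both to differentiate under the expectation and to bound $E(Xe^{\theta X})$ uniformly for $0\le\theta\le\gamma$, and it is this confinement of $\theta$ to a bounded interval that forces the right-tail bound into two regimes, in contrast to the left tail where $\theta$ may run over all of $(-\infty,0)$. I expect this finiteness/domain bookkeeping, rather than any individual inequality, to be the main point to get right.
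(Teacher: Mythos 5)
Your proof follows the paper's argument essentially line for line: you derive the differential inequality $m'(\theta)\le\mu(1+K\theta)m(\theta)$ (and $m'(\theta)\ge\mu(1+\nu\theta)m(\theta)$ for $\theta<0$) from the additive, monotone, independent coupling $Y^s=Y+X$ via (\ref{ineq-main}), transcribe to $M(\theta)$, integrate, and optimize the Chernoff bound over the admissible $\theta$-range, exactly as the paper does. One bookkeeping point that you (like the paper) leave implicit is that after the substitution $\theta\mapsto\theta/\sigma$ the $M$-inequality holds on $\theta\in(0,\gamma\sigma)$ rather than $(0,\gamma)$, so the endpoint used for the second right-tail branch and the resulting $t$-threshold should carry the factor $\sigma$; this affects the stated constants when $\sigma<1$ but not the method.
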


\begin{proof}
 Since $Y^s=Y+X$ with $Y$ and $X$ independent and
$X \ge 0$, using (\ref{ineq-main}) with
$\theta \in (0,\gamma)$ we have,
\beas
E(e^{\theta Y^s}-e^{\theta Y})&=&E(e^{\theta (X+Y)}-e^{\theta Y})\le \frac{1}{2}E\left(\theta X(e^{\theta (X+Y)}+e^{\theta Y})\right)\\
&=&\frac{\theta}{2}E\left( X(e^{\theta X}+1)e^{\theta Y}\right)=\frac{\theta}{2}E\left(X(e^{\theta X}+1)\right) E(e^{\theta Y})\\
&\le & \frac{\theta}{2}(E(Xe^{\gamma X})+E(X))E(e^{\theta Y})\\
&=& K \theta m(\theta) \quad \mbox{where $K=(C+\nu)/2$ and $m(\theta)=E(e^{\theta Y})$.}
\enas
Now adding $m(\theta)$ to both sides yields
\beas
E(e^{\theta Y^s})\le (1+K\theta)m(\theta),
\enas
and therefore
\bea\label{bd-diff-comp}
m'(\theta)=E(Ye^{\theta Y})=\mu E(e^{\theta Y^s})\le \mu (1+K\theta) m(\theta).
\ena
Again, with $M(\theta)$ the moment generating function of $(Y-\mu)/\sigma$,
\beas
M(\theta)=Ee^{\theta(Y-\mu)/\sigma}=e^{-\theta \mu/\sigma}m(\theta/\sigma),
\enas
by (\ref{bd-diff-comp}) we have,
\bea
M'(\theta) &=& -(\mu/\sigma) e^{-\theta \mu/\sigma}m(\theta/\sigma) +  e^{-\theta \mu/\sigma}m'(\theta/\sigma)/\sigma\nonumber\\
           &\le& -(\mu/\sigma) e^{-\theta \mu/ \sigma}m(\theta/\sigma) + (\mu/\sigma) e^{-\theta \mu/\sigma}\left( 1+K\frac{\theta}{\sigma} \right) m(\theta/\sigma)\nonumber\\
           &=& (\mu/\sigma^2) K\theta M(\theta).\label{bd-M-prime-comp}
\ena
Integrating, and using the fact that $M(0)=1$ yields
\beas
M(\theta)\le \exp\left(\frac{K\mu\theta^2}{2\sigma^2}\right)\quad\mbox{for $\theta \in (0,\gamma)$.}
\enas
Hence for a fixed $t>0$, for all $\theta \in (0,\gamma)$,
\beas
P\left(\frac{Y-\mu}{\sigma}\ge t\right)\le e^{-\theta t}M(\theta)\le \exp\left(-\theta t+\frac{K\mu\theta^2}{2\sigma^2}\right).
\enas
The infimum of the quadratic in the exponent is attained at $\theta=t\sigma^2/K\mu$. When this value
lies in $(0,\gamma)$ we obtain the first, right tail bound, for
$t$ in the bounded interval, while setting $\theta=\gamma$ yields the second.

Moving on to the left tail bound, using (\ref{ineq-main}) for $\theta<0$ yields
\beas
E(e^{\theta Y}-e^{\theta Y^s})&\le & -\frac{\theta}{2}E((Y^s-Y)(e^{\theta Y}+e^{\theta Y^s}))
\le  -\theta E(Xe^{\theta Y})=-\theta E(X)E(e^{\theta Y}).
\enas
Rearranging we obtain
\beas
m'(\theta)=\mu E(e^{\theta Y^s})\ge \mu(1+\theta\nu)m(\theta).
\enas

Following calculations similar to (\ref{bd-M-prime-comp}) one obtains
\beas
M'(\theta)\ge (\mu/\sigma^2)\nu \theta  M(\theta) \quad \mbox{for all $\theta<0$,}
\enas
which upon integration over $[\theta,0]$ yields
\beas
M(\theta)\le \exp\left(\frac{\nu\mu\theta^2}{2\sigma^2}\right)\quad\mbox{for all $\theta<0$}.
\enas
Hence for any fixed $t>0$, for all $\theta<0$,
\bea
P\left(\frac{Y-\mu}{\sigma}\le -t\right)\le e^{\theta t} M(\theta)\le \exp\left(\theta t+\frac{\nu\mu\theta^2}{2\sigma^2}\right).\label{left-comp-pf}
\ena
Substituting $\theta=-t\sigma^2/(\nu\mu)$ in (\ref{left-comp-pf}) yields the lower tail bound,
thus completing the proof.
\end{proof}

Though Theorem \ref{thm-comp-poi} applies in principle to all nonnegative infinitely divisible distributions with generating functions for $Y$ and $X$ that satisfy the given growth conditions, we now specialize to the subclass of compound Poisson distributions, over which it is always possible to determine the independent increment $X$. Not too much is sacrificed in narrowing the focus to this case, since a nonnegative infinitely divisible random variable $Y$ has a compound Poisson distribution if and only if $P(Y=0)>0$.

\subsubsection{Compound Poisson distribution}
One important subfamily of the infinitely divisible distributions are the compound Poisson distributions, that is, those distributions that are given by
\bea \label{YisNZi}
Y=\sum_{i=1}^N Z_i,\quad\mbox{where $N\sim\mbox{Poisson}(\lambda)$, and $\{Z_i\}_{i=1}^\infty$ are independent and distributed as $Z$.}
\ena
Compound Poisson distributions are popular in several applications, such as insurance mathematics, seismological data modelling, and reliability theory; the reader is referred to \cite{bar} for a detailed review.

Although $Z$ is not in general required to be nonnegative, in order to be able to size bias $Y$ we restrict ourselves to this situation.  It is straightforward to verify that when the moment generating function $m_Z(\theta)=Ee^{\theta Z}$ of $Z$ is finite, then the moment generating function
$m(\theta)$ of $Y$ is given by
\beas
m(\theta)=\exp(-\lambda(1-m_Z(\theta))).
\enas
In particular $m(\theta)$ is finite whenever $m_Z(\theta)$ is finite. As $Y$ in (\ref{YisNZi}) is infinitely divisible the equality (\ref{decomp-poi}) holds for some $X$; the following lemma determines the distribution of $X$ in this particular case.

\begin{lemma}
%[Arratia, Goldstein]
\label{lemma-ag}
Let $Y$ have the compound Poisson distribution as in (\ref{YisNZi}) where $Z$ is nonnegative and has finite, positive mean.
Then
\beas
Y^s=Y+Z^s,
\enas
has the $Y$ size biased distribution, where $Z^s$ has the $Z$ size bias distribution and is independent of $N$ and $\{Z_i\}_{i=1}^\infty$.
\end{lemma}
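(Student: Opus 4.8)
The plan is to verify directly that the random variable $Y + Z^s$, with $Z^s$ having the $Z$ size bias distribution and independent of $N$ and $\{Z_i\}_{i=1}^\infty$, satisfies the size bias characterization (\ref{EWfWchar}) for $Y$. Writing $\mu_Z = E[Z] > 0$, Wald's identity gives $\mu := E[Y] = \lambda \mu_Z$, so it suffices to show that $E[Yf(Y)] = \lambda \mu_Z\, E[f(Y+Z^s)]$ for every $f$ for which these expectations exist.

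First I would condition on $N$ and exploit the exchangeability of the $Z_i$: for each $n \ge 1$, writing $W_n = \sum_{j=2}^n Z_j$ (with $W_1 = 0$),
\[
E\Big[\textstyle\sum_{i=1}^n Z_i\, f\big(\sum_{j=1}^n Z_j\big)\,\Big|\, N=n\Big] = n\, E\big[Z_1 f(Z_1 + W_n)\big].
\]
Since $Z_1$ is independent of $W_n$, applying the one-dimensional identity (\ref{EWfWchar}) to $Z_1$ with the test function $z \mapsto f(z + W_n)$ (conditionally on $W_n$) yields $E[Z_1 f(Z_1 + W_n)] = \mu_Z\, E[f(Z^s + W_n)]$, where $Z^s$ is independent of $W_n$. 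Hence $E[Yf(Y)\mid N=n] = n\mu_Z\, E[f(Z^s + W_n)]$ for $n \ge 1$, while the $N=0$ term contributes zero since then $Y = 0$.

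The crucial step is the Poisson size bias identity $n\,P(N=n) = \lambda\, P(N=n-1)$, which gives
\[
E[Yf(Y)] = \sum_{n \ge 1} n P(N=n)\, \mu_Z E[f(Z^s + W_n)] = \lambda \mu_Z \sum_{n\ge 1} P(N=n-1)\, E[f(Z^s + W_n)].
\]
Reindexing by $m = n-1$ and using that $W_{m+1} = \sum_{j=2}^{m+1} Z_j$ has the same distribution as $\sum_{j=1}^m Z_j$ (independently of $Z^s$, by the i.i.d.\ assumption), the right side equals $\lambda \mu_Z \sum_{m \ge 0} P(N=m)\, E[f(Z^s + \sum_{j=1}^m Z_j)] = \lambda \mu_Z\, E[f(Z^s + Y)]$, the last equality upon recognizing $Y = \sum_{j=1}^N Z_j$ with $N \sim \mbox{Poisson}(\lambda)$ independent of $Z^s$. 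This is exactly the desired identity. I do not anticipate a genuine obstacle beyond routine care: one must justify the interchange of summation and expectation (valid on the stated class of $f$, where the relevant expectations converge absolutely) and the reindexing. An equally short alternative, available whenever $m_Z(\theta) = E[e^{\theta Z}] < \infty$, is to differentiate $m(\theta) = \exp(-\lambda(1 - m_Z(\theta)))$ to get $m'(\theta) = \lambda m_Z'(\theta) m(\theta)$, and combine this with $m'(\theta) = \mu E[e^{\theta Y^s}]$ to obtain $E[e^{\theta Y^s}] = (m_Z'(\theta)/\mu_Z)\, m(\theta) = E[e^{\theta Z^s}]\, E[e^{\theta Y}]$, identifying the independent increment as $Z^s$.
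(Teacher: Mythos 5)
Your main argument is correct and takes a genuinely different route from the paper. The paper works entirely on the transform side: it first derives the general identity $\phi_{V^s}(u)=\phi_V'(u)/(iEV)$ relating the characteristic function of a size-biased variable to that of the original, then differentiates the compound Poisson characteristic function $\phi_Y(u)=\exp(-\lambda(1-\phi_Z(u)))$ to read off $\phi_{Y^s}(u)=\phi_Y(u)\phi_{Z^s}(u)$ and hence the decomposition. Your primary proof instead verifies the defining relation $E[Yf(Y)]=\mu E[f(Y+Z^s)]$ directly, by conditioning on $N$, invoking exchangeability of the $Z_i$, applying the one-dimensional size-bias identity to $Z_1$, and then shifting the Poisson index via $nP(N=n)=\lambda P(N=n-1)$. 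This is more elementary (no Fourier analysis needed), makes the mechanism transparent --- size-biasing a compound Poisson sum amounts to appending one extra, size-biased, summand --- and imposes no transform finiteness hypotheses. The moment-generating-function alternative you sketch at the end is essentially the paper's argument with $e^{\theta x}$ in place of $e^{iux}$, but note that variant needs $m_Z(\theta)<\infty$ on an interval, an assumption the paper's characteristic-function version (and your main argument) avoids; this matters since the lemma is stated under only a finite-mean hypothesis on $Z$. Your caveat about interchanging summation and expectation is the right thing to flag and is routine for the stated class of $f$.
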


\begin{proof}
Let $\phi_V(u)=Ee^{iuV}$ for any random variable $V$. If $V$ is nonnegative
and has finite positive mean, using $f(y)=e^{iuy}$ in (\ref{EWfWchar}) results in
\bea \label{chVs}
\phi_{V^s}(u)=\frac{1}{EV}\left( EVEe^{iuV^s} \right)=\frac{1}{EV}EVe^{iuV}=\frac{1}{iEV}\phi_V'(u).
\ena
It is easy to check that the characteristic function of the compound Poisson $Y$ in (\ref{YisNZi})
is given by
\bea\label{cf-comp}
\phi_Y(u)=\exp(-\lambda(1-\phi_Z(u))),
\ena
and letting $EZ=\vartheta$, that $EY=\lambda \vartheta$.
Now applying (\ref{chVs}) and (\ref{cf-comp}) results in
\beas
\phi_{Y^s}(u)=\frac{1}{i\lambda \vartheta}\phi_Y'(u)=\frac{1}{i \vartheta}\phi_Y(u)\phi_Z'(u)
= \phi_Y(u)\phi_{Z^s}(u).
\enas
\end{proof}

To illustrate Lemma \ref{lemma-ag}, consider the Cram\'{e}r-Lundberg model \cite{emb} from insurance mathematics. Suppose an insurance company starts with an initial capital $u_0$, and premium is collected at the constant rate $\alpha$. Claims arrive according to a homogenous Poisson process $\{N_\tau\}_{\tau \ge 0}$ with rate $\lambda$, and the claim sizes are independent with common distribution $Z$. The aggregate claims $Y_\tau$ made by time $\tau \ge 0$ is therefore given by (\ref{YisNZi}) with $N$ and $\lambda$ replaced by $N_\tau$ and $\lambda_\tau$, respectively.

Distributions for $Z$ which are of interest for applications include the Gamma, Weibull, and Pareto, among others.
For concreteness, if $Z \sim \mbox{Gamma}(\alpha,\beta)$ then $Z^s \sim \mbox{Gamma}(\alpha+1,\beta)$,
and the mean $\nu$ of the increment $Z^s$, and the mean $\mu_\tau$ and variance $\sigma_\tau^2$ of $Y_\tau$, are given by
$$
\nu= \quad (\alpha+1)\beta, \quad \mu_\tau=  \lambda \tau \alpha \beta \quad\mbox{and}\quad \sigma_\tau^2=\lambda \tau \beta^2\alpha.
$$

The conditions of Theorem \ref{thm-comp-poi} are satisfied
with any $\gamma \in (0,1/\beta)$ since $E(e^{\theta Y})<\infty$ and $E(Z^se^{\theta Z^s})<\infty$ for all $\theta<1/\beta$. Taking $\gamma=1/(M\beta)$ for $M >1$ for example, yields
$$
C=E(Z^se^{\gamma Z^s})=(\alpha+1)\beta (\frac{M}{M-1})^{\alpha+2}.
$$
For instance, the lower tail bound of Theorem \ref{thm-comp-poi} now yields a bound on the probability that the aggregate claims by time $\tau$ will be `small', of
\beas
P\left(\frac{Y_\tau-\mu_\tau}{\sigma_\tau}\le -t \right)\le \exp\left(-\frac{t^2 }{2(\alpha+1)}\right).
\enas
It should be noted that in some applications one may be interested in $Z$ which are heavy tailed, and hence do not satisfy the conditions in Theorem \ref{thm-comp-poi}.

\end{document}